\documentclass[final,onefignum,onetabnum]{siamart220329}

\usepackage{amsmath,amsfonts,amsopn,amssymb}
\usepackage{graphicx}
\graphicspath{ {./Figures/} }
\usepackage[caption=false]{subfig}
\usepackage{multirow,relsize,makecell}
\usepackage{url}
\ifpdf
  \DeclareGraphicsExtensions{.eps,.pdf,.png,.jpg}
\else
  \DeclareGraphicsExtensions{.eps}
\fi
\usepackage{algorithm, algorithmic}
\usepackage{pgfplots}
\pgfplotsset{width=0.45\hsize,compat=1.18}


\numberwithin{theorem}{section}
\newsiamremark{remark}{Remark}
\newsiamremark{assumption}{Assumption}
\newsiamremark{example}{Example}
\crefname{remark}{Remark}{Remarks}
\crefname{assumption}{Assumption}{Assumptions}
\crefname{example}{Example}{Examples}

\headers{Two-level Schwarz for $2\MakeLowercase{m}$th-order elliptic}{Jongho Park}

\title{Two-level overlapping Schwarz preconditioners with universal coarse spaces for $2\MakeLowercase{m}$th-order elliptic problems\thanks{Submitted to arXiv.
}}

\author{
Jongho Park\thanks{Applied Mathematics and Computational Sciences Program, Computer, Electrical and Mathematical Science and Engineering Division, King Abdullah University of Science and Technology~(KAUST), Thuwal 23955, Saudi Arabia
 (\email{jongho.park@kaust.edu.sa}, \url{https://sites.google.com/view/jonghopark}).}
 }

\ifpdf
\hypersetup{
  pdftitle={Two-level overlapping Schwarz preconditioners for 2mth-order elliptic problems with universal coarse spaces},
  pdfauthor={Jongho Park}
}
\fi


\newcommand\rT{\mathrm{T}}

\newcommand\cI{\mathcal{I}}
\newcommand\cT{\mathcal{T}}
\newcommand\tS{\widetilde{S}}

\newcommand\sumT{\sum_{T \in \mathcal{T}_h}}

\usepackage[normalem]{ulem}
\usepackage{color}

\begin{document}

\maketitle

\begin{abstract}
We propose a novel universal construction of two-level overlapping Schwarz preconditioners for $2m$th-order elliptic boundary value problems, where $m$ is a positive integer.
The word ``universal" here signifies that the coarse space construction can be applied to any finite element discretization for any $m$ that satisfies some common assumptions.
We present numerical results for conforming, nonconforming, and discontinuous Galerkin-type finite element discretizations for high-order problems to demonstrate the scalability of the proposed two-level overlapping Schwarz preconditioners.
\end{abstract}

\begin{keywords}
Overlapping Schwarz preconditioners, Coarse spaces, $2m$th-order elliptic problems
\end{keywords}

\begin{AMS}
65N55, 
65F08, 
65N30 
\end{AMS}

\section{Introduction}
\label{Sec:Introduction}
Elliptic problems of order $2m$, where $m \in \mathbb{Z}_{>0}$, which are natural generalizations of the well-known Poisson equation, have theoretical importance and find applications in a broad range of science and engineering.
For instance, fourth-order elliptic problems~($m = 2$) arise in continuum mechanics, including the theory of linear elasticity~\cite{Brenner:1996,Ciarlet:2002}, and in optimal control theory~\cite{BS:2017,LGY:2009}.
On the other hand, sixth-order elliptic problems~($m = 3$) are encountered in mathematical models within material sciences, such as in the oxidation of silicon~\cite{BLN:2004,King:1989} and in the phase-field crystal model~\cite{CW:2008,WWL:2009}.

Due to the importance of $2m$th-order elliptic problems, there has been a vast literature on numerical methods for them.
For fourth-order problems, i.e., when $m = 2$, various finite element methods including conforming~\cite{AFS:1968,BFS:1965,Valdman:2020}, nonconforming~\cite{HS:2013,LL:1975,WSX:2007}, and discontinuous Galerkin-type~\cite{BS:2005,EGHLMT:2002} have been studied.
There have also been a number of existing works on finite element methods for sixth-order problems~($m = 3$), including nonconforming methods~\cite{JW:2023}, interior penalty methods~\cite{GN:2011}, and mixed methods~\cite{DILW:2019}.
Recently, universal approaches that can be applied to any $2m$th-order problems have been developed, such as conforming finite element discretizations on tensor product elements~\cite{HZ:2015} and simplical elements~\cite{HLW:2023}, nonconforming elements~\cite{WX:2013,WX:2019}, virtual elements~\cite{CH:2020}, and discontinuous Galerkin methods~\cite{CLQ:2022}.
Moreover, neural network-based approaches have also been considered; see~\cite{HX:2023,Xu:2020}.

As the mesh size decreases in finite element methods for $2m$th-order problems, the condition number typically increases dramatically.
This underscores the importance of designing effective preconditioners for solving large-scale problems with a high number of degrees of freedom.
In this viewpoint, this paper focuses on developing two-level overlapping Schwarz preconditioners for various finite element discretizations of $2m$th-order elliptic boundary value problems.
The Schwarz method has proven to be a powerful parallel algorithm for solving large-scale problems encountered in partial differential equations and related scientific problems over the past decades.
Existing results on second-order problems~($m=1$) can be found in, e.g.,~\cite{LP:2023,TW:2005} and references therein.
For fourth-order problems~($m=2$), additive Schwarz preconditioners for conforming, nonconforming, and discontinuous Galerkin-type discretizations were proposed in \cite{Brenner:1996,BW:2005,Zhang:1996}.
Meanwhile, several works have extended the application of Schwarz methods to nonlinear problems; for optimization-based approaches, see~\cite{LP:2022,Park:2023,Park:2024,TX:2002}, and for nonlinear preconditioning-based approaches, refer to~\cite{CK:2002,CL:2011,DGKKM:2016,HC:2007}.
However, to the best of knowledge, there have been no existing works on Schwarz preconditioners that can be applied to the case of general $m$ and its discretizations.

In this paper, we introduce novel two-level overlapping Schwarz preconditioners for various finite element discretizations of $2m$th-order elliptic problems.
A key aspect of our approach lies in the construction of coarse spaces.
These coarse spaces are built using conforming finite element spaces~\cite{HLW:2023,HZ:2015} defined on a coarse mesh, with a construction method that remains independent of both $m$ and the type of finite element discretization employed.
Thus, the construction of coarse spaces in our approach can be deemed ``universal'', as it is applicable to any finite element discretization for any $m$.
Equipped with the universal coarse spaces, we demonstrate that the proposed preconditioners ensure uniformly bounded condition numbers of the preconditioned operators when $H/\delta$ is fixed, where $H$ represents the typical diameter of a subdomain and $\delta$ measures the overlap among the subdomains.
To validate the numerical performance of the proposed preconditioners across various finite elements, we present numerical results for fourth- and sixth-order problems discretized using the Bogner--Fox--Schmit~(BFS)~\cite{BFS:1965,Valdman:2020,WSX:2007}, Adini~\cite{HS:2013,LL:1975}, $C^0$ interior penalty~\cite{BS:2005,EGHLMT:2002}, and Jin--Wu~\cite{JW:2023} finite elements.

The rest of this paper is organized as follows.
In \cref{Sec:Model}, we introduce a model $2m$th-order elliptic problem and its abstract finite element discretization.
In \cref{Sec:Schwarz}, we provide a brief summary of the abstract theory of overlapping Schwarz preconditioners.
A domain decomposition setting that is used in the construction of the proposed two-level overlapping Schwarz preconditioners is presented in \cref{Sec:2L}.
Convergence analysis for the proposed two-level overlapping Schwarz preconditioners is presented in \cref{Sec:Convergence}.
A detailed description on the universal construction of coarse spaces is given in \cref{Sec:Universal}.
Applications of the proposed preconditioners to high-order conforming, nonconforming, and discontinuous Galerkin-type elements are presented in \cref{Sec:Applications}.
Finally, we conclude the paper with remarks in \cref{Sec:Conclusion}.

\section{\texorpdfstring{$2m$}{2m}th-order elliptic problems}
\label{Sec:Model}
In this section, we introduce a model $2m$th-order elliptic boundary value problem and its finite element discretizations.

In what follows, we use the notation $A \lesssim B$ and $B \gtrsim A$ to represent that there exists a constant $c > 0$ independent of the geometric parameters $h$, $H$, and $\delta$, which will be precisely defined later, such that $A \leq c B$.
We write $A \approx B$ if and only if $A \lesssim B$ and $A \gtrsim B$.

Let $\Omega \subset \mathbb{R}^d$~($d \in \mathbb{Z}_{>0})$ be a bounded polyhedral domain.
We consider the following polyharmonic equation with a homogeneous boundary condition:
\begin{equation}
\label{model_strong}
\begin{split}
(- \Delta )^m u = f & \quad \text{ in } \Omega, \\
u = \frac{\partial u}{\partial \nu} = \dots = \frac{\partial^{m-1} u }{ \partial \nu^{m-1}} = 0 & \quad \text{ on } \partial \Omega,
\end{split}
\end{equation}
where $f \in L^2 (\Omega)$ and $\nu$ is the outward unit normal vector field along $\partial \Omega$.
The problem~\eqref{model_strong} admits the following weak formulation: find $u \in H_0^m (\Omega)$ such that
\begin{equation}
\label{model_cont}
\int_{\Omega} \nabla^m u : \nabla^m v \,dx = \int_{\Omega} fv \,dx
\quad \forall v \in H_0^m (\Omega).
\end{equation}
Well-posedness of the problem~\eqref{model_cont} is ensured by the Lax--Milgram theorem~\cite{CH:2020}.

\subsection{Finite element discretizations}
In order to solve~\eqref{model_cont} numerically, we need to consider a suitable discretization of~\eqref{model_cont}.
Here, extending the result for fourth-order problems presented in~\cite{Park:2023}~(see also~\cite{BS:2017,BSZ:2012}), we present an abstract finite element discretization for~\eqref{model_cont} that is suitable for scalable two-level overlapping Schwarz preconditioners.
Examples of existing finite elements that adhere to the abstract framework will be provided in \cref{Sec:Applications}.

We consider the following abstract finite element discretization of~\eqref{model_cont}: find $u_h \in S_h$ such that
\begin{equation}
\label{model_FEM}
a_h (u_h, v) = \int_{\Omega} fv \,dx
\quad \forall v \in S_h,
\end{equation}
where $S_h \subset L^2 (\Omega)$ is a finite element space defined on a quasi-uniform triangulation\footnotemark[1]\footnotetext[1]{In some finite elements for~\eqref{model_cont}, the reference element is not a simplex but a polytope~(see \cref{Sec:Applications}).
Nevertheless, for the sake of convenience, we use the terminology ``triangulation" even in these cases.} $\cT_h$ of the domain $\Omega$, and $h$ stands for the characteristic element diameter.
In the following, we summarize some key assumptions on~\eqref{model_FEM}, which are essential for the convergence analysis of two-level overlapping Schwarz preconditioners~(cf.~\cite[Assumption~2.1]{Park:2023}).

\begin{assumption}

\label{Ass:FEM}
In the finite element discretization~\eqref{model_FEM}, we have the following:
\begin{itemize}
\item There exists a norm $\| \cdot \|_h$ defined on $S_h + H_0^m (\Omega)$ that satisfies
\begin{equation}
\label{norm}
a_h (u, u) \approx \| u \|_h^2, \quad u \in S_h,
\end{equation}
which implies that $a_h  (\cdot, \cdot)$ is equivalent to $ \| \cdot \|_h^2$ in $S_h$.

\item There exist a conforming finite element space $\tS_h \subset H_0^m (\Omega)$ and an enriching operator $E_h \colon S_h \rightarrow \tS_h$ such that
\begin{multline}
    \label{E_h}
    \| u - E_h u \|_{L^2 (\Omega)} + \sum_{j=1}^{m-1} h^j \left( \sumT |u - E_h u|_{H^j (T)}^2 \right)^{\frac{1}{2}} \\
    + h^m |E_h u |_{H^m (\Omega)}
    \lesssim h^m \| u \|_h, \text{ } u \in S_h.
\end{multline}
\end{itemize}
\end{assumption}

Indeed, the above assumptions are commonly satisfied in many existing finite element methods for~\eqref{model_cont}.
One may refer to~\cite{BS:2017,BSZ:2012} for case studies on nonconforming and discontinuous Galerkin methods for fourth-order problems.
Note that~\eqref{norm} implies that the bilinear form $a_h (\cdot, \cdot)$ is coercive in $S_h$, ensuring that~\eqref{model_FEM} has a unique solution $u_h \in S_h$. 

\begin{remark}
\label{Rem:FEM}
To guarantee convergence to a continuous solution, we need additional assumptions on~\eqref{model_FEM}, such as elliptic regularity, interpolation estimates, and the approximability of $a_h (\cdot, \cdot)$ for $a( \cdot, \cdot)$~\cite{BS:2017,BSZ:2012}.
However, as these assumptions are not required for the convergence analysis of overlapping Schwarz preconditioners, we omit them here.
\end{remark}

\begin{remark}
\label{Rem:pointwise}
Compared to \cref{Ass:FEM}, in~\cite[Assumption~2.1]{Park:2023}, there is an additional requirement that the enriching operator $E_h$ should preserve the function values at the vertices of $\cT_h$.
This condition becomes necessary when dealing with constrained problems such as variational inequalities. 
However, in this paper, such assumptions are not needed because we are addressing the linear model problem~\eqref{model_cont}.
\end{remark}

\section{Overlapping Schwarz preconditioners}
\label{Sec:Schwarz}
In this section, we present a brief overview of the abstract framework of two-level overlapping Schwarz preconditioners introduced in~\cite{TW:2005}.
One may refer to~\cite{BS:2008,Xu:1992} for alternative representations.
Throughout this section, we employ a slight abuse of notation by not distinguishing between finite element functions and the corresponding vectors of degrees of freedom. 
The same applies for discrete operators and their matrix representations.

Let $A$ and $f$ be the stiffness matrix and the load vector induced by $a_h (\cdot, \cdot)$ and $\int_{\Omega} f \cdot \,dx$ in~\eqref{model_FEM}, respectively, i.e.,
\begin{equation*}
    u^T A v = a_h (u,v), \quad
    f^T u = \int_{\Omega} f u \,dx,
\end{equation*}
for all $u, v \in S_h$.
Then the variational formulation~\eqref{model_FEM} is equivalent to the linear system
\begin{equation}
\label{model_linear}
    A u = f.
\end{equation}

Next, we assume that the solution space $V = S_h$ of~\eqref{model_linear} admits a decomposition of the form
\begin{equation*}
    V = \sum_{k=0}^N R_k^T V_k,
\end{equation*}
where each $V_k$, $0 \leq k \leq N$, is a finite-dimensional space and $R_k^T \colon V_k \rightarrow V$ is a suitable prolongation operator.
The space $V_0$ is referred to as the coarse space and is constructed on a coarse mesh.
On the other hand, the remaining spaces are referred to as local spaces and are associated with local problems defined on subdomains.

In this setting, the two-level additive Schwarz preconditioner $M^{-1}$ is defined as
\begin{equation}
\label{preconditioner}
M^{-1} = \sum_{k=0}^N R_k^{\rT} A_k^{-1} R_k,
\end{equation}
where $A_k = R_k A R_k^{\rT}$, $0 \leq k \leq N$.
It is well-known that the condition number $\kappa (M^{-1} A)$ of the preconditioned matrix $M^{-1} A$ can be estimated by a stable decomposition argument; see~\cite[Theorem~2.7]{TW:2005} and~\cite[Section~4.1]{Park:2020}.

\begin{lemma}
\label{Lem:ASM}
Suppose that there exists a constant $C_0$, such that any $u \in V$ admits a stable decomposition
\begin{equation*}
u = \sum_{k=0}^N R_k^{\rT} u_k, \quad u_k \in V_k, \text{ } 0 \leq k \leq N,
\end{equation*}
that satisfies
\begin{equation*}
\sum_{k=0}^N a_h ( R_k^{\rT} u_k, R_k^{\rT} u_k ) \leq C_0^2 a_h (u, u).
\end{equation*}
In addition, suppose that there exists a constant $N_c$ that satisfies
\begin{equation*}
    a_h \left( \sum_{k=1}^N R_k^T u_k, \sum_{k=1}^N R_k^T u_k \right) \leq N_c \sum_{k=1}^N a_h (R_k^T u_k, R_k^T u_k),
    \quad u_k \in V_k, \text{ } 1 \leq k \leq N.
\end{equation*}
Then we have
\begin{equation*}
\kappa (M^{-1} A) \leq C_0^2 (N_c + 1).
\end{equation*}
\end{lemma}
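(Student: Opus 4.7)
The plan is to estimate $\lambda_{\min}(M^{-1}A)$ and $\lambda_{\max}(M^{-1}A)$ with respect to the $a$-inner product separately. The starting observation is that each $P_k := R_k^{\rT} A_k^{-1} R_k A$ is the $a$-orthogonal projection of $V$ onto $R_k^{\rT} V_k$, so the preconditioned operator takes the form $M^{-1}A = \sum_{k=0}^N P_k$, and $\kappa(M^{-1}A) = \lambda_{\max}(M^{-1}A)/\lambda_{\min}(M^{-1}A)$. It therefore suffices to show $\lambda_{\min}(M^{-1}A) \geq 1/C_0^2$ and $\lambda_{\max}(M^{-1}A) \leq N_c + 1$.

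For the lower bound, I would fix $u \in V$, invoke the stable decomposition $u = \sum_k R_k^{\rT} u_k$ from the hypothesis, and write $a(u,u) = \sum_k a(R_k^{\rT} u_k, u)$. Since $R_k^{\rT} u_k \in R_k^{\rT} V_k$ and $P_k$ is the $a$-orthogonal projection onto this subspace, $a(R_k^{\rT} u_k, u) = a(R_k^{\rT} u_k, P_k u)$. Two applications of the Cauchy--Schwarz inequality (once inside each term using the $a$-inner product, once across the sum in $\ell^2$) yield
\begin{equation*}
a(u,u) \leq \left( \sum_{k=0}^N a(R_k^{\rT} u_k, R_k^{\rT} u_k) \right)^{1/2} \left( \sum_{k=0}^N a(P_k u, P_k u) \right)^{1/2}.
\end{equation*}
The first factor is at most $C_0 \, a(u,u)^{1/2}$ by the stable decomposition estimate, while the second equals $a(M^{-1}A u, u)^{1/2}$ after using $a(P_k u, P_k u) = a(P_k u, u)$. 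Cancelling one copy of $a(u,u)^{1/2}$ and squaring gives $a(u,u) \leq C_0^2 \, a(M^{-1}A u, u)$, the desired lower bound.

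For the upper bound, I would apply $M^{-1}A$ to $u$ and set $u_k := A_k^{-1} R_k A u$, so that $M^{-1}A u = \sum_k R_k^{\rT} u_k$ and, by the same projector identity, $a(M^{-1}A u, u) = \sum_k a(R_k^{\rT} u_k, R_k^{\rT} u_k)$. Splitting off the coarse part and letting $w := \sum_{k=1}^N R_k^{\rT} u_k$, one has
\begin{equation*}
a(M^{-1}A u, M^{-1}A u) = a(R_0^{\rT} u_0, R_0^{\rT} u_0) + 2 a(R_0^{\rT} u_0, w) + a(w,w),
\end{equation*}
while the second hypothesis bounds $a(w,w) \leq N_c \sum_{k=1}^N a(R_k^{\rT} u_k, R_k^{\rT} u_k)$. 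Bounding the cross term by the weighted inequality $2 a(R_0^{\rT} u_0, w) \leq N_c \, a(R_0^{\rT} u_0, R_0^{\rT} u_0) + N_c^{-1} a(w,w)$ and collecting terms produces $a(M^{-1}A u, M^{-1}A u) \leq (N_c + 1) \, a(M^{-1}A u, u)$, from which $\lambda_{\max}(M^{-1}A) \leq N_c + 1$ follows by testing against an eigenfunction of $M^{-1}A$.

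The main obstacle is the calibration of the weighted Cauchy--Schwarz used on the coarse-local cross term: a naive splitting of $2 a(R_0^{\rT} u_0, w)$ produces a suboptimal prefactor of the form $2 \max(1, N_c)$, whereas the constant $N_c + 1$ claimed in the lemma is recovered only with the precise weight $\alpha = N_c$, which is exactly what makes the coarse and local contributions enter with matching constants after combining with the hypothesis on $a(w,w)$.
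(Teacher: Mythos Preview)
The paper does not prove this lemma; it simply cites \cite[Theorem~2.7]{TW:2005} and \cite[Section~4.1]{Park:2020}. Your argument is correct and is essentially the standard proof found in those references: the lower eigenvalue bound via the projector identity $a(R_k^{\rT}u_k,u)=a(R_k^{\rT}u_k,P_ku)$ together with Cauchy--Schwarz, and the upper bound via $a(M^{-1}Au,M^{-1}Au)\le (N_c+1)\,a(M^{-1}Au,u)$ obtained by splitting off the coarse component and using the coloring hypothesis on the local sum. The weighted Young inequality with parameter $N_c$ on the cross term is exactly the right calibration, and your closing remark identifies this correctly.
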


Typically, in overlapping domain decomposition methods, the constant $N_c$ in \cref{Lem:ASM} can be estimated by a coloring argument~\cite{Park:2020,TW:2005}, and is independent of $N$ or other geometric parameters.
Hence, to analyze the convergence of the additive Schwarz preconditioner, it suffices to estimate the stable decomposition parameter $C_0$ in \cref{Lem:ASM}.

\begin{remark}
\label{MSM}
The multiplicative Schwarz preconditioner, where local and coarse problems are solved sequentially, can be analyzed in terms of the constants $C_0$ and $N_c$ appearing in \cref{Lem:ASM} as well; see \cite{BPWX:1991} and \cite[Theorem 2.9]{TW:2005}.
Therefore, in this paper, we focus solely on the additive Schwarz preconditioner.
\end{remark}

\section{Two-level domain decomposition}
\label{Sec:2L}
In this section, we present a domain decomposition setting for the two-level additive Schwarz preconditioner~\eqref{preconditioner}.
Let $\cT_H$ be a quasi-uniform triangulation of $\Omega$ such that $\cT_h$ is a refinement of $\cT_H$, where $H$ stands for the characteristic element diameter of $\cT_H$.
The domain $\Omega$ is decomposed into $N$ overlapping subdomains $\{ \Omega_k \}_{k=1}^N$, where each subdomain $\Omega_k$, $1 \leq k \leq N$, is a union of $\cT_h$-elements.
We assume that $\operatorname{diam} \Omega_k \approx H$.
The overlap among the subdomains is measured by a parameter $\delta > 0$.

\subsection{Local spaces}
We define the local spaces $\{ V_k \}_{k=1}^N$ used in the two-level additive Schwarz preconditioner~\eqref{preconditioner} as follows.
Let $V = S_h$, and we set
\begin{equation}
\label{V_k}
V_k = S_h (\Omega_k), \quad 1 \leq k \leq N,
\end{equation}
where $S_h (\Omega_k) $ is the finite element space of the same type as $S_h$, but defined on the restriction of $\cT_h$ in $\Omega_k$.
We also set the prolongation operator $R_k^T$ as the natural extension operator from $S_h (\Omega_k)$ to $S_h$. 
We present below essential assumptions on the local spaces $\{ V_k \}_{k=1}^N$ for two-level overlapping Schwarz preconditioners~(cf.~\cite[Assumption~3.1]{Park:2023}).

\begin{assumption}
\label{Ass:local}
In the local spaces~\eqref{V_k}, we have the following:
\begin{itemize}
\item The local spaces $\{ V_k \}_{k=1}^N$ can be colored with a number of colors $N_c$ independently of $N$.

\item For any $u \in S_h$, there exists a decomposition $u = \sum_{k=1}^N R_k^T u_k$, $u_k \in R_k^T V_k$, such that
\begin{multline*}
\sum_{k=1}^N \| R_k^T u_k \|_h^2 \lesssim \| u \|_h^2 + \sum_{j=1}^{m-1} \frac{1}{\delta^{2(m-j)}} \left( \sumT |u|_{H^j (T)}^2 \right)  \\
+ \frac{1}{H\delta^{2m-1}} \| E_h u \|_{L^2 (\Omega)}^2 + \left( \frac{H}{\delta} \right)^{2m-1} |E_h u|_{H^m (\Omega)}^2.
\end{multline*}
\end{itemize}
\end{assumption}

The coloring condition in \cref{Ass:local} can be found in, e.g.,~\cite[Section~2.5.1]{TW:2005}.
On the other hand, usually, the decomposition condition in \cref{Ass:local} is derived by using a $W^{m,\infty}$-partition of unity subordinate to the domain decomposition $\{ \Omega_k \}_{k=1}^N$ and invoking a trace theorem-type argument developed in~\cite{Brenner:1996,DW:1994,TW:2005}.
One may refer to, for instance, \cite{TW:2005} and \cite{Brenner:1996,BW:2005} for derivations of the decomposition conditions for second- and fourth-order problems, respectively.
In the following, we provide a generalization of the trace theorem-type argument for $H^m$-functions, which provides an estimate of the $L^2$-norm of a function over a strip along the boundary of the domain.

\begin{lemma}
\label{Lem:trace}
Let $D \subset \mathbb{R}^d$ ($d \in \mathbb{Z}_{>0}$) be a bounded polyhedral domain with diameter  $H$, and let $D_{\delta} \subset D$ be the set of points that are within a distance $\delta$ of $\partial D$.
Then we have
\begin{equation*}
    \| u \|_{L^2 (D_{\delta})}^2 \lesssim \frac{\delta}{H} \left( \| u \|_{L^2 (D)}^2 + H^{2m} |u|_{H^m (D)}^2 \right),
    \quad u \in H^m (D).
\end{equation*}
\end{lemma}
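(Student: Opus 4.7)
The plan is to reduce the inequality to a scale-free form on a unit-diameter reference polyhedron, establish the case $m = 1$ there by a classical one-dimensional trace argument along the inward normal to each face, and then bootstrap to general $m$ via an interpolation inequality between $L^2$ and $H^m$.

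First, I would apply the affine change of variables $\hat x = x/H$. Using the scaling identities $\|\hat u\|_{L^2(\hat D)}^2 = H^{-d}\|u\|_{L^2(D)}^2$ and $|\hat u|_{H^k(\hat D)}^2 = H^{2k-d}|u|_{H^k(D)}^2$, the claim becomes
\begin{equation*}
\|\hat u\|_{L^2(\hat D_\epsilon)}^2 \lesssim \epsilon\bigl(\|\hat u\|_{L^2(\hat D)}^2 + |\hat u|_{H^m(\hat D)}^2\bigr), \quad \epsilon = \delta/H,
\end{equation*}
on a reference polyhedron $\hat D$ of unit diameter, where the factor $H^{2m}$ in the original statement is exactly what the rescaled $H^m$-seminorm contributes. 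I may assume $\epsilon$ is smaller than a fixed threshold depending only on $\hat D$, since otherwise $\hat D_\epsilon = \hat D$ and the bound is trivial.

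Next, for the case $m = 1$, I would exploit the finite-face structure of $\partial \hat D$. Writing $\partial \hat D = \bigcup_i F_i$ with inward unit normals $\nu_i$, for $\epsilon$ small enough the near-face tubes $T_i = \{x' + t\nu_i : x' \in F_i,\ t \in (0,\epsilon)\}$ lie inside $\hat D$ and together cover $\hat D_\epsilon$. For $\hat u \in C^\infty(\overline{\hat D})$ and a fixed $c > \epsilon$ depending only on $\hat D$, the identity $\hat u(x',t) - \hat u(x',s) = -\int_t^s \partial_{\nu_i}\hat u(x',\tau)\,d\tau$, combined with Cauchy--Schwarz, gives
\begin{equation*}
\hat u(x',t)^2 \lesssim \hat u(x',s)^2 + \int_0^c |\nabla \hat u(x',\tau)|^2\,d\tau.
\end{equation*}
Integrating in $t \in (0,\epsilon)$, averaging over $s \in (\epsilon, c)$, integrating over $x' \in F_i$, and summing over $i$ produces the $m = 1$ bound; a density argument then extends it from smooth functions to $H^1(\hat D)$.

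Finally, for general $m$, I would close the gap using the Gagliardo--Nirenberg interpolation $|\hat u|_{H^1(\hat D)} \lesssim \|\hat u\|_{L^2(\hat D)}^{1-1/m} |\hat u|_{H^m(\hat D)}^{1/m} + \|\hat u\|_{L^2(\hat D)}$, valid on the bounded Lipschitz domain $\hat D$, followed by Young's inequality, yielding $|\hat u|_{H^1(\hat D)}^2 \lesssim \|\hat u\|_{L^2(\hat D)}^2 + |\hat u|_{H^m(\hat D)}^2$. Inserting this into the $m=1$ estimate and rescaling back to $D$ delivers the stated inequality. The only genuinely nonroutine step is the 1D trace argument in the $m=1$ case, where care is needed to verify that the near-face tubes really lie in $\hat D$ and jointly cover $\hat D_\epsilon$; the implicit constants depend only on the number of faces and a minimum-thickness parameter of $\hat D$, both fixed once scaling has been factored out, so they are harmless.
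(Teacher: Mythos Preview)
Your argument is correct and takes a genuinely different route from the paper's. The paper works directly at level $m$: it covers $D_\delta$ by shape-regular patches of diameter $\mathcal{O}(\delta)$, applies an $H^m$ Friedrichs inequality (proved separately in an appendix) on each patch to obtain
\[
\|u\|_{L^2(D_\delta)}^2 \lesssim \delta^{2m}|u|_{H^m(D_\delta)}^2 + \sum_{j=0}^{m-1}\delta^{2j+1}\Bigl\|\tfrac{\partial^j u}{\partial\nu^j}\Bigr\|_{L^2(\partial D)}^2,
\]
then uses the trace theorem on each normal-derivative term and finally the same interpolation inequality $|u|_{H^j}^2 \lesssim \|u\|_{L^2}^2 + |u|_{H^m}^2$ to collapse the intermediate seminorms. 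Your reduction to the classical $m=1$ strip estimate followed by Gagliardo--Nirenberg is more elementary: it sidesteps the $H^m$ Friedrichs machinery entirely and needs only the well-known $H^1$ trace-strip bound plus one interpolation. The paper's approach, on the other hand, makes the dependence on all boundary derivatives explicit and reuses a Friedrichs inequality that the paper needs elsewhere anyway.

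One small caution: your claim that the normal tubes $T_i=\{x'+t\nu_i : x'\in F_i,\ t\in(0,\epsilon)\}$ cover $\hat D_\epsilon$ fails at reentrant corners of a nonconvex polyhedron (points in the ``shadow'' of a reentrant vertex lie in $\hat D_\epsilon$ but in no such tube). This is easily patched by extending each face slightly in its own hyperplane before forming the tube, or by using tubes of width $C\epsilon$ with $C$ depending only on the interior angles; either way the constants still depend only on the shape of $\hat D$, so the remainder of your argument goes through unchanged.
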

\begin{proof}
This proof closely follows the argument in~\cite[Lemma~3.10]{TW:2005}.
We may assume that $\delta \leq H$.
Take any $u \in H^m (D)$.
We cover $D_{\delta}$ by shape-regular patches with $\mathcal{O} (\delta)$ diameters.
By invoking the Friedrichs inequality presented in \cref{Thm:Friedrichs} for each patch and summing over the patches, we obtain
\begin{equation}
\label{Lem1:trace}
    \| u \|_{L^2 (D_{\delta})}^2 \lesssim \delta^{2m} | u |_{H^m (D_{\delta})}^2 + \sum_{j=0}^{m-1} \delta^{2j+1} \left\| \frac{\partial^j u}{\partial \nu^j} \right\|_{L^2 (\partial D)}^2,
\end{equation}
where $\nu$ is the outward unit normal vector field along $\partial D$.
By the trace theorem, for each $j$, we have
\begin{equation}
\label{Lem2:trace}
\left\| \frac{\partial^j u}{\partial \nu^j} \right\|_{L^2 (\partial D)}^2
\lesssim H | u |_{H^{j+1} (D)}^2 + \frac{1}{H} | u |_{H^j (D)}^2.
\end{equation}
Combining~\eqref{Lem1:trace} and~\eqref{Lem2:trace} yields
\begin{equation*}
\begin{split}
\| u \|_{L^2 (D_{\delta})}^2
&\lesssim \delta^{2m} | u |_{H^m (D_{\delta})}^2 + \sum_{j=0}^{m-1} \delta^{2j+1} \left( H | u |_{H^{j+1} (D)}^2 + \frac{1}{H} | u |_{H^j (D)}^2 \right) \\
&\lesssim \frac{\delta}{H} \left( \| u \|_{L^2 (D)}^2 + \sum_{j=1}^{m} H^{2j} | u |_{H^j (D)}^2 \right) \\
&\lesssim \frac{\delta}{H} \left( \| u \|_{L^2 (D)}^2 + H^{2m} | u |_{H^m (D)}^2 \right),
\end{split}
\end{equation*}
where the last inequality is due to~\cite[Theorem~1.8]{Necas:2012}.
This completes the proof.
\end{proof}

\subsection{Coarse space}
In two-level overlapping Schwarz preconditioners, the coarse space $V_0$ is typically defined by a finite element space on the coarse mesh $\cT_H$.
A common choice for $V_0$ is $S_H$, which is the finite element space of the same type as $S_h$ but defined on $\cT_H$.
However, in this section, we introduce an alternative choice for $V_0$ that is compatible with any finite element discretizations fitting into the framework introduced in \cref{Sec:Model}.

Let $\{ x^i \}_{i \in \cI_H}$ denote the collection of all vertices of $\cT_H$, where $\cI_H$ represents the set of indices.
For each $i \in \cI_H$, a region $\omega_i \subset \Omega$ is defined as the union of the coarse elements $T \in \cT_H$ such that $x^i \in \partial T$:
\begin{equation}
    \label{omega_i}
    \overline{\omega}_i = \bigcup_{T \in \cT_H, \text{ } x^i \in \partial T} \overline{T}.
\end{equation}
Utilizing existing results on the construction of conforming finite elements in any dimension~\cite{HLW:2023,HZ:2015}, we are able to explicitly find a collection $\{ \phi_i \}_{i \in \cI_H}$ of functions in $W^{m, \infty} (\Omega)$ such that
\begin{subequations}
\label{coarse_basis}
\begin{align}
\phi_i = 0& \quad \text{ on } \Omega \setminus \omega_i, \label{coarse_basis1} \\
\sum_{i \in \cI_H} \phi_i = 1& \quad \text{ on } \overline{\Omega}, \label{coarse_basis2} \\
| \phi_i |_{W^{j, \infty} (\omega_i)} \lesssim \frac{1}{H^j},&
\quad 1 \leq j \leq m, \text{ } i \in \cI_H.\label{coarse_basis3}
\end{align}
\end{subequations}
We will present how to construct $\{ \phi_i \}_{i \in \cI_H}$ using conforming finite element basis functions in \cref{Sec:Universal}.

Now, we define the coarse space $V_0$ as follows:
\begin{equation}
    \label{V_0}
    V_0 = \left( \sum_{i \in \cI_H} \phi_i \mathbb{P}_{m-1} (\omega_i) \right) \cap H_0^m (\Omega),
\end{equation}
where $\mathbb{P}_{m-1} (\omega_i)$ represents the collection of $(m-1)$th-degree polynomials defined on $\omega_i$.
We note that a similar type of coarse spaces was considered in~\cite{Park:2023}.
As $V_0 \subset H_0^m (\Omega)$, it can be regarded as a conforming finite element space on $\cT_H$.
We also mention that the concept of employing conforming coarse spaces for nonconforming methods was previously examined in~\cite{Lee:1993}.

\begin{remark}
\label{Rem:pou}
Different from~\cite{Park:2023}, the collection $\{ \phi_i \}_{i \in \cI_H}$ need not to be nonnegative.
That is, each $\phi_i$ may have negative function values.
This difference makes the construction of $\{ \phi_i \}_{i \in \cI_H}$ more tractable, while the construction of a nonnegative smooth partition of unity on a general mesh with an explicit formulation is a rather challenging task. 
In~\cite{KO:2017}, a nonnegative smooth partition of unity with an explicit formulation for the cartesian grid was introduced.
\end{remark}

\begin{remark}
\label{Rem:GDSW}
The proposed coarse space~\eqref{V_0} has some resemblance to the generalized generalized Dryja--Smith--Widlund~(GDSW) coarse space~\cite{DKW:2008a,HKKR:2019} in that both are defined in terms of partition of unity and can be used with various finite element discretizations.
The GDSW coarse space is composed of the discrete harmonic extensions of partition of unity functions on the interface of a nonoverlapping domain decomposition.
This makes it discretization-dependent and it varies with different finite element discretizations.
In contrast, the proposed coarse space~\eqref{V_0} is common for all finite element discretizations.
Additionally, while the GDSW coarse space does not require a coarse mesh but rather a nonoverlapping domain decomposition, making it more suitable for less regular subdomains~\cite{DKW:2008b}, the proposed coarse space relies on a conforming finite element space defined on the coarse mesh $\cT_H$.
\end{remark}

The following assumption summarizes the existence of a coarse prolongation operator $R_0^T$, which maps the coarse space $V_0$ to the fine space $V$, with certain approximation properties.
Note that $R_0^T V_0 \subset V$, although $V_0$ may not be a subspace of $V$.
It is worth mentioning that this assumption was also considered in~\cite[Assumption~4.1]{Park:2023}.
As we will discuss in \cref{Sec:Applications}, in most applications, selecting $R_0^T$ as the nodal interpolation operator to $V$ ensures that \cref{Ass:coarse} holds.

\begin{assumption}
\label{Ass:coarse}
The coarse prolongation operator $R_0^T \colon V_0 \rightarrow V$ satisfies
\begin{multline}
\label{R_0^T}
\| u - R_0^T u \|_{L^2 (\Omega)} + \sum_{j=1}^{m-1} H^j \left( \sumT | u - R_0^T u |_{H^j (T)}^2 \right)^{\frac{1}{2}} + H^m \| R_0^T u \|_h \\
\lesssim H^m | u |_{H^m (\Omega)}, \quad u \in V_0.
\end{multline}
\end{assumption}

\section{Convergence analysis}
\label{Sec:Convergence}
In this section, we provide an estimate for the condition number of the preconditioned matrix $M^{-1} A$, where $M^{-1}$ is the two-level additive Schwarz preconditioner given in~\eqref{preconditioner}.

In the analysis of two-level overlapping Schwarz preconditioners, as in many existing works~\cite{Brenner:1996,BW:2005,TW:2005,Zhang:1996}, a crucial aspect is to examine the approximability and stability of an interpolation operator onto the coarse space.
In \cref{Lem:J_H}, we establish that the coarse space $V_0$ defined in~\eqref{V_0} admits a quasi-interpolation operator with favorable approximability and stability estimates.

\begin{lemma}
\label{Lem:J_H}
Let $V_0$ be the coarse space defined in~\eqref{V_0}.
There exists a quasi-interpolation operator $J_H \colon H_0^m (\Omega) \rightarrow V_0$ that satisfies
\begin{equation}
\label{Lem1:J_H}
    \| u - J_H u \|_{L^2 (\Omega)} + \sum_{j=1}^{m-1} H^j | u - J_H u |_{H^j (\Omega)} + H^m |J_H u|_{H^m (\Omega)}
    \lesssim H^m | u |_{H^m (\Omega)},
    \text{ } u \in H_0^m (\Omega).
\end{equation}
\end{lemma}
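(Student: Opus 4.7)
The plan is to construct $J_H$ as a Clément-type quasi-interpolation operator built from local $L^2$-projections onto $\mathbb{P}_{m-1}(\omega_i)$. Let $\cI_H^{\circ} \subset \cI_H$ denote the subset of vertices $i$ for which $\omega_i \cap \partial \Omega = \emptyset$, and for each $i \in \cI_H^{\circ}$ let $Q_i \colon L^2 (\omega_i) \to \mathbb{P}_{m-1} (\omega_i)$ be the $L^2$-orthogonal projection. Then define
\[
J_H u = \sum_{i \in \cI_H^{\circ}} \phi_i\, Q_i u, \quad u \in H_0^m (\Omega).
\]
Membership $J_H u \in V_0$ is verified in two parts: the sum structure is immediate, while for the boundary condition we use \eqref{coarse_basis1} — since $\omega_i \subset \Omega \setminus \partial \Omega$ for $i \in \cI_H^{\circ}$, each term $\phi_i Q_i u$, and hence all its derivatives up to order $m-1$, vanishes in a neighborhood of $\partial \Omega$.

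To estimate $u - J_H u$, invoke the partition-of-unity identity \eqref{coarse_basis2} to split
\[
u - J_H u = \sum_{i \in \cI_H^{\circ}} \phi_i (u - Q_i u) + \sum_{i \in \cI_H \setminus \cI_H^{\circ}} \phi_i u.
\]
For each interior patch $i \in \cI_H^{\circ}$, the Bramble--Hilbert lemma on the shape-regular domain $\omega_i$ (of diameter $\approx H$) yields $|u - Q_i u|_{H^k (\omega_i)} \lesssim H^{m-k} |u|_{H^m (\omega_i)}$ for $0 \leq k \leq m$. Combining this with the Leibniz product rule and the scaling bounds \eqref{coarse_basis3} on $\phi_i$ gives
\[
|\phi_i (u - Q_i u)|_{H^j (\omega_i)} \lesssim \sum_{k=0}^{j} H^{-(j-k)} \cdot H^{m-k} |u|_{H^m (\omega_i)} \lesssim H^{m-j} |u|_{H^m (\omega_i)},
\]
for all $0 \leq j \leq m$.

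For boundary patches $i \in \cI_H \setminus \cI_H^{\circ}$, I exploit the fact that $u \in H_0^m (\Omega)$ has vanishing trace up to order $m-1$ on the nonempty piece $\partial \Omega \cap \partial \omega_i$. A Friedrichs-type inequality on $\omega_i$ analogous to the one invoked in the proof of \cref{Lem:trace} yields $|u|_{H^k (\omega_i)} \lesssim H^{m-k} |u|_{H^m (\omega_i)}$, and the same Leibniz argument produces $|\phi_i u|_{H^j (\omega_i)} \lesssim H^{m-j} |u|_{H^m (\omega_i)}$. Squaring, summing over $i$, and exploiting the finite-overlap property of $\{\omega_i\}_{i \in \cI_H}$ yields the $L^2$ and $H^j$ ($1 \leq j \leq m-1$) error estimates in~\eqref{Lem1:J_H}. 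The $H^m$ stability of $J_H u$ then follows from the triangle inequality $|J_H u|_{H^m (\Omega)} \leq |u|_{H^m (\Omega)} + |u - J_H u|_{H^m (\Omega)}$ together with the same per-patch bound applied at $j = m$.

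The main obstacle is the boundary residual $\sum_{i \in \cI_H \setminus \cI_H^{\circ}} \phi_i u$: because $\{\phi_i\}_{i \in \cI_H}$ is a partition of unity only over $\overline{\Omega}$, dropping the boundary vertices to enforce $J_H u \in H_0^m (\Omega)$ introduces a term that is not a polynomial approximation error but a pure boundary-value term, which must be absorbed using the homogeneous boundary conditions of $u$. Establishing the Friedrichs-type inequality on each such patch with a constant that is uniform in $i$ — depending only on the shape regularity of $\cT_H$ and not on the particular geometry of the patch's intersection with $\partial \Omega$ — is the technical crux of the argument.
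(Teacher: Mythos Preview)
Your approach is essentially the paper's own: define $J_H u = \sum_i \phi_i\,(J_i u)$ with $J_i u$ a local $\mathbb{P}_{m-1}$ approximation on interior patches and $J_i u = 0$ on boundary patches, then combine the Leibniz rule with~\eqref{coarse_basis3}, Bramble--Hilbert, and a Friedrichs inequality to obtain~\eqref{Lem1:J_H}. One refinement: the paper's dichotomy is whether $\partial\omega_i \cap \partial\Omega$ has \emph{positive $(d-1)$-measure}, not merely whether $\overline{\omega}_i$ meets $\partial\Omega$; your criterion can misclassify a patch that touches $\partial\Omega$ only in a lower-dimensional set, for which the Friedrichs inequality (\cref{Thm:Friedrichs}) is unavailable, whereas under the paper's criterion such a patch is treated as interior and the resulting term $\phi_i J_i u$ still lies in $H_0^m(\Omega)$ because its trace is supported on a null set.
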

\begin{proof}
Take any $u \in H_0^m (\Omega)$ and $i \in \mathcal{I}_H$.
We begin by constructing a local polynomial approximation $J_i u \in \mathbb{P}_{m-1} (\overline{\omega}_i)$ of $u$ on $\omega_i$, where the definition of $\omega_i$ was given in~\eqref{omega_i}, that satisfies the following:
\begin{equation}
\label{Lem2:J_H}
    \| u - J_i u \|_{L^2 (\omega_i)} + \sum_{j=1}^{m-1} H^j | u - J_i u |_{H^j (\omega_i)} + H^m |u|_{H^m (\omega_i)}
    \lesssim H^m | u |_{H^m (\omega_i)}.
\end{equation}
If $\partial \omega_i \cap \partial \Omega$ has nonzero measure, then setting $J_i u = 0$ satisfies~\eqref{Lem2:J_H}, thanks to the Friedrichs inequality presented in \cref{Thm:Friedrichs}.
Otherwise, if $\partial \omega_i \cap \partial \Omega$ has zero measure, we can find an $(m-1)$th-degree polynomial $J_i u$ satisfying~\eqref{Lem2:J_H} by invoking the Bramble--Hilbert lemma~\cite[Lemma~4.3.8]{BS:2008}.

We define $J_H u \in V_0$ as
\begin{equation*}
    J_H u = \sum_{i \in \mathcal{I}_H} (J_i u) \phi_i,
\end{equation*}
where $\phi_i$ was given in~\eqref{coarse_basis}.
Due to the construction of $J_i u$, it is clear that $J_H u$ satisfies the homogeneous boundary condition on $\partial \Omega$.
In the following, we employ a similar argument as in~\cite[Lemma~4.5]{Park:2023} to derive~\eqref{Lem1:J_H}.

For a coarse element $T \in \cT_H$, let $\{ x^i \}_{i=1}^{n_T}$ be the vertices of $T$, where $n_T$ is uniformly bounded with respect to $T$ because $\cT_H$ is quasi-uniform.
If we define a region $\omega_T \subset \Omega$ as
\begin{equation*}
    \overline{\omega}_T = \bigcup_{i=1}^{n_T} \overline{\omega}_i,
\end{equation*}
then for $1 \leq i \leq n_T$ and $0 \leq j \leq m$, we have
\begin{equation}
\label{Lem3:J_H}
| ( u - J_i u ) \phi_i |_{H^j (\omega_i)}
\lesssim \sum_{l=0}^j | u - J_i u |_{H^l (\omega_i)} | \phi_i |_{W^{j-l, \infty} (\omega_i)}
\lesssim H^{m-j} | u |_{H^m (\omega_i)},
\end{equation}
where the second inequality is because of~\eqref{Lem2:J_H} and~\eqref{coarse_basis3}.
It follows that
\begin{equation}
\label{Lem4:J_H}
| u - J_H u |_{H^j (T)} 
\stackrel{\eqref{coarse_basis2}}{\leq} \sum_{i=1}^{n_T} | (u - J_i u ) \phi_i |_{H^j (\omega_i)}
\stackrel{\eqref{Lem3:J_H}}{\lesssim} H^{m-j} | u |_{H^m (\omega_T)}.
\end{equation}
Summing~\eqref{Lem4:J_H} over all $T$ yields
\begin{equation}
\label{Lem5:J_H}
    | u - J_H u |_{H^j (\Omega)} \lesssim H^{m-j} | u |_{H^m (\Omega)}.
\end{equation}
Finally, by combining~\eqref{Lem5:J_H} and
\begin{equation*}
    | J_H u |_{H^m (\Omega)} \leq | u - J_H u |_{H^m (\Omega)} + | u |_{H^m (\Omega)}
    \stackrel{\eqref{Lem5:J_H}}{\lesssim} | u |_{H^m (\Omega)},
\end{equation*}
we complete the proof of~\eqref{Lem1:J_H}.
\end{proof}

Now, we present a condition number estimate for the preconditioned matrix $M^{-1} A$ in \cref{Thm:kappa}.
The proof of \cref{Thm:kappa} can be done by utilizing the approximation properties of $J_H$ presented in \cref{Lem:J_H}.

\begin{theorem}
\label{Thm:kappa}
Suppose that the following conditions hold:
\begin{itemize}
\item The finite element discretization~\eqref{model_FEM} satisfies \cref{Ass:FEM}.
\item The local spaces~\eqref{V_k} satisfy \cref{Ass:local}.
\item The coarse space~\eqref{V_0} satisfies \cref{Ass:coarse}.
\end{itemize}
Then we have
\begin{equation*}
\kappa (M^{-1} A) \lesssim \left( 1 + \left( \frac{H}{\delta} \right)^{2m-1} \right),
\end{equation*}
where $M^{-1}$ was given in~\eqref{preconditioner}.
\end{theorem}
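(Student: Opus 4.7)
The plan is to invoke \cref{Lem:ASM}: since \cref{Ass:local} provides the coloring constant $N_c$, it suffices to construct a stable decomposition with parameter $C_0^2 \lesssim 1 + (H/\delta)^{2m-1}$. My strategy follows the standard two-level template. Given $u \in V = S_h$, I would first lift it to the conforming space via $E_h u \in \tS_h \subset H_0^m(\Omega)$, then apply the quasi-interpolation of \cref{Lem:J_H} to define the coarse component $u_0 := J_H E_h u \in V_0$; the remainder $w := u - R_0^\rT u_0 \in S_h$ is then distributed over the local spaces by \cref{Ass:local}, giving $w = \sum_{k=1}^N R_k^\rT w_k$.

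For the coarse contribution, \cref{Ass:coarse} gives $\|R_0^\rT u_0\|_h \lesssim |u_0|_{H^m(\Omega)}$, and chaining \cref{Lem:J_H} with the bound $|E_h \cdot|_{H^m(\Omega)} \lesssim \|\cdot\|_h$ from \cref{Ass:FEM} yields $|u_0|_{H^m(\Omega)} \lesssim \|u\|_h$; hence $a_h(R_0^\rT u_0, R_0^\rT u_0) \lesssim \|u\|_h^2$ and, by the triangle inequality, $\|w\|_h \lesssim \|u\|_h$. For the local piece I would bound each of the four contributions on the right-hand side of \cref{Ass:local} applied to $w$ by a constant multiple of $\bigl(1 + (H/\delta)^{2m-1}\bigr)\|u\|_h^2$. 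The piecewise seminorms $\sum_T |w|_{H^j(T)}^2$ for $1 \leq j \leq m-1$ are treated by inserting $\pm E_h u \pm u_0$ and estimating the three resulting differences by \cref{Ass:FEM}, \cref{Lem:J_H}, and \cref{Ass:coarse}, respectively; each piece contributes an $H^{2(m-j)}$ factor that combines with $\delta^{-2(m-j)}$ to yield at worst $(H/\delta)^{2(m-1)} \leq (H/\delta)^{2m-1}$. The $L^2$-term is handled by writing $E_h w = (E_h u - u_0) + (u_0 - R_0^\rT u_0) + (R_0^\rT u_0 - E_h R_0^\rT u_0)$; the three summands are bounded by $H^m \|u\|_h$ using \cref{Lem:J_H}, \cref{Ass:coarse}, and \cref{Ass:FEM} in turn, and the prefactor $(H\delta^{2m-1})^{-1}$ then reproduces $(H/\delta)^{2m-1}$. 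Finally, $|E_h w|_{H^m(\Omega)} \leq |E_h u|_{H^m(\Omega)} + |E_h R_0^\rT u_0|_{H^m(\Omega)} \lesssim \|u\|_h$ by two applications of \cref{Ass:FEM}.

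The main technical obstacle is the bookkeeping: every error piece contributed by $E_h$, $J_H$, or $R_0^\rT$ must carry a sharp factor $H^{m-j}$ in its $H^j$-seminorm (broken or global, as appropriate), so that after pairing with the $\delta$-weights of \cref{Ass:local} the worst growth is the announced $(H/\delta)^{2m-1}$. Once the four local terms are controlled, summing the coarse and local contributions gives $C_0^2 \lesssim 1 + (H/\delta)^{2m-1}$, and \cref{Lem:ASM} concludes the proof.
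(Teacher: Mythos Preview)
Your proposal is correct and follows essentially the same route as the paper's proof: define $u_0 = J_H E_h u$, bound the coarse energy via \cref{Ass:coarse}, \cref{Lem:J_H}, and \eqref{E_h}, then feed $w = u - R_0^\rT u_0$ into \cref{Ass:local} and estimate each of the four terms by telescoping through $E_h u$ and $u_0$. The only cosmetic difference is that the paper first bounds the broken norms of $w$ itself (e.g., $\|w\|_{L^2(\Omega)} \lesssim H^m \|u\|_h$) and then invokes \eqref{E_h} once at the end to pass to $E_h w$, whereas you telescope $E_h w$ directly; the arithmetic and the final $(H/\delta)^{2m-1}$ bound are identical.
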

\begin{proof}
Thanks to \cref{Lem:ASM} and \cref{Ass:local}, it suffices to find a stable decomposition estimate for $u \in V$.
We set $u_0 = J_H (E_h u)$, where $J_H$ is the quasi-interpolation operator given in \cref{Lem:J_H}.
We observe that
\begin{equation}
    \label{Thm1:kappa}
    | J_H (E_h u) |_{H^m (\Omega)}^2
    \stackrel{\eqref{Lem1:J_H}}{\lesssim} | E_h u |_{H^m (\Omega)}^2
    \stackrel{\eqref{E_h}}{\lesssim} \| u \|_h^2.
\end{equation}
Then we can estimate $a_h (R_0^T u_0, R_0^T u_0)$ as follows:
\begin{equation}
    \label{Thm2:kappa}
    a_h (R_0^T u_0, R_0^T u_0)
    \stackrel{\eqref{norm}}{\approx} \| R_0^T J_H (E_h u) \|_h^2
    \stackrel{\eqref{R_0^T}}{\lesssim} | J_H (E_h u) |_{H^m (\Omega)}^2
    \stackrel{\eqref{Thm1:kappa}}{\lesssim} \| u \|_h^2.
\end{equation}

Meanwhile, by \cref{Ass:local}, there exist $u_k \in V_k$, $1 \leq k \leq N$, that satisfy $u - R_0^T u_0 = \sum_{k=1}^N R_k^T u_k$ and 
\begin{equation} \begin{split}
    \label{Thm3:kappa}
    \sum_{k=1}^N a_h &(R_k^T u_k, R_k^T u_k)
    \stackrel{\eqref{norm}}{\approx} \sum_{k=1}^N \| R_k^T u_k \|_h^2 \\
    &\lesssim \| u - R_0^T u_0 \|_h^2 + \sum_{j=1}^{m-1} \left( \frac{1}{\delta^{2(m-j)}} \sumT | u - R_0^T u_0 |_{H^j (T)}^2 \right) \\
    &\quad + \frac{1}{H \delta^{2m-1}} \| E_h (u - R_0^T u_0) \|_{L^2 (\Omega)}^2 + \left( \frac{H}{\delta} \right)^{2m-1} | E_h (u - R_0^T u_0) |_{H^m (\Omega)}^2.
\end{split} \end{equation}
In order to estimate the rightmost-hand side of~\eqref{Thm3:kappa}, we estimate some norms of $u - R_0^T u_0$; using~\eqref{E_h},~\eqref{Lem1:J_H},~\eqref{R_0^T}, and~\eqref{Thm1:kappa}, we get
\begin{subequations}
\label{Thm4:kappa}
\begin{equation} \begin{split}
&\| u - R_0^T u_0 \|_{L^2 (\Omega)}^2 \\
&\lesssim \| u - E_h u \|_{L^2 (\Omega)}^2 + \| E_h u - J_H (E_h u) \|_{L^2 (\Omega)}^2 + \| J_H (E_h u) - R_0^T J_H (E_h u) \|_{L^2 (\Omega)}^2 \\
&\lesssim \left( h^{2m} + H^{2m} \right) \| u \|_h^2
\lesssim H^{2m} \| u \|_h^2.
\end{split} \end{equation}
In the same manner, for each $j$, we have
\begin{equation} \begin{split}
    &\sumT | u - R_0^T u_0 |_{H^j (T)}^2  \\
    &\lesssim \sumT  | u - E_h u |_{H^j (T)}^2 + | E_h u - J_H (E_h u) |_{H^j (\Omega)}^2 \\
    &\quad + \sumT | J_H (E_h u) - R_0^T J_H (E_h u) |_{H^j (T)}^2 \\
    &\lesssim ( h^{2(m-j)} + H^{2(m-j)} ) \| u \|_h^2
    \lesssim H^{2(m-j)} \| u \|_h^2
\end{split} \end{equation}
and
\begin{equation}
\| u - R_0^T u_0 \|_h^2
\lesssim \| u \|_h^2 + \| R_0^T u_0 \|_h^2
\lesssim \| u \|_h^2 + | J_H (E_h u) |_{H^m (\Omega)}^2
\lesssim \| u \|_h^2.
\end{equation}
\end{subequations}
Then, invoking~\eqref{Thm3:kappa} with~\eqref{E_h} and~\eqref{Thm4:kappa} yields
\begin{equation}
    \label{Thm5:kappa}
    \sum_{k=1}^N a_h (R_k^T u_k, R_k^T u_k ) \lesssim \left[ 1 + \sum_{j=1}^{m-1} \left( \frac{H}{\delta} \right)^{2(m-j)} + \left( \frac{H}{\delta} \right)^{2m-1} \right] \| w \|_h^2.
\end{equation}
Combining~\eqref{Thm2:kappa} and~\eqref{Thm5:kappa} yields the desired result.
\end{proof}

\Cref{Thm:kappa} implies that the convergence rate of an iterative method preconditioned by the two-level additive Schwarz preconditioner has a uniform bound when $H/\delta$ is fixed.
In both cases of small overlap $\delta \approx h$ and generous overlap $\delta \approx H$, we can deduce that the method is scalable in the sense that the convergence rate depends only on the subdomain size $H/h$ and is independent of the full-dimension problem size $h$.

\section{Universal construction of coarse spaces}
\label{Sec:Universal}
In this section, we discuss the construction of the collection $\{ \phi_i \}_{i \in \cI_H}$, which is the main ingredient in defining the coarse space $V_0$ as described in~\eqref{V_0}.
The construction introduced here is considered universal as it offers a unified approach that can be applied to any finite element discretization for any $m$.

Let $\widehat{S}_H \subset H^m (\Omega)$ be a conforming finite element space defined on the coarse triangulation $\cT_H$, without any essential boundary conditions.
Examples of such conforming finite element spaces for general $m$ can be found in~\cite{HLW:2023} for simplical meshes and~\cite{HZ:2015} for rectangular meshes.
Then the constant function $1$ on $\Omega$ can be represented using basis functions for $\widehat{S}_H$ as follows:
\begin{equation}
\label{one}
1 = \sum_{j \in \widehat{\mathcal{I}}_{H}} \widehat{\phi}_j,
\end{equation}
where $\{ \widehat{\phi}_j \}_{j \in \widehat{\cI}_H}$ denotes the collection of all nodal basis functions for $\widehat{S}_H$ corresponding to function evaluation at points.
The construction of $\{ \phi_i \}_{i \in \cI_H}$ can be done by distributing the right-hand side of~\eqref{one} to each $\omega_i$ as evenly as possible.
Namely, for each $i \in \cI_H$, we set
\begin{equation*}
    \phi_i = \sum_{j \in \widehat{\cI}_{H, i}} \frac{1}{n_{j}} \widehat{\phi}_j.
\end{equation*}
Here, $\widehat{\cI}_{H,i}$ represents the index set for the basis functions in $\{ \widehat{\phi}_j \}_{j \in \widehat{\cI}_H}$, where each $\widehat{\phi}_j$ is associated with a polytope having $x^i$ as its vertex, and $n_j$ denotes the number of vertices of the polytope corresponding to $\widehat{\phi}_j$.
Then it is obvious that~\eqref{coarse_basis1} and~\eqref{coarse_basis2} holds by construction.
Moreover, thanks to the quasi-uniformity of $\cT_H$, one can verify~\eqref{coarse_basis3} using a scaling argument~\cite[Section~3.4]{TW:2005}.

In the following, we provide several examples of $\{ \phi_i \}_{i \in \cI_H}$ for different settings.

\begin{example}
First, we consider a trivial case $m = 1$.
We assume that $\cT_H$ consists of quasi-uniform simplices, and that $\widehat{S}_H$ is the continuous and piecewise linear finite element space.
In this case, we readily observe that the coarse space $V_0$ is given by $V_0 = \widehat{S}_H \cap H_0^1 (\Omega)$, i.e., the piecewise linear finite element space with the homogeneous essential boundary condition.
\end{example}

\begin{example}
\label{Ex:BFS}
In the case when $d = 2$ and $m = 2$, we suppose that $\cT_H$ is given by a rectangular grid and define $\widehat{S}_H$ as the BFS finite element space defined on $\cT_H$.
Then, the coarse space $V_0$, constructed using basis functions of $\widehat{S}_H$, aligns with the one considered in~\cite{Park:2023}.
\end{example}

\begin{figure}
    \centering
    \includegraphics[width=0.97\textwidth]{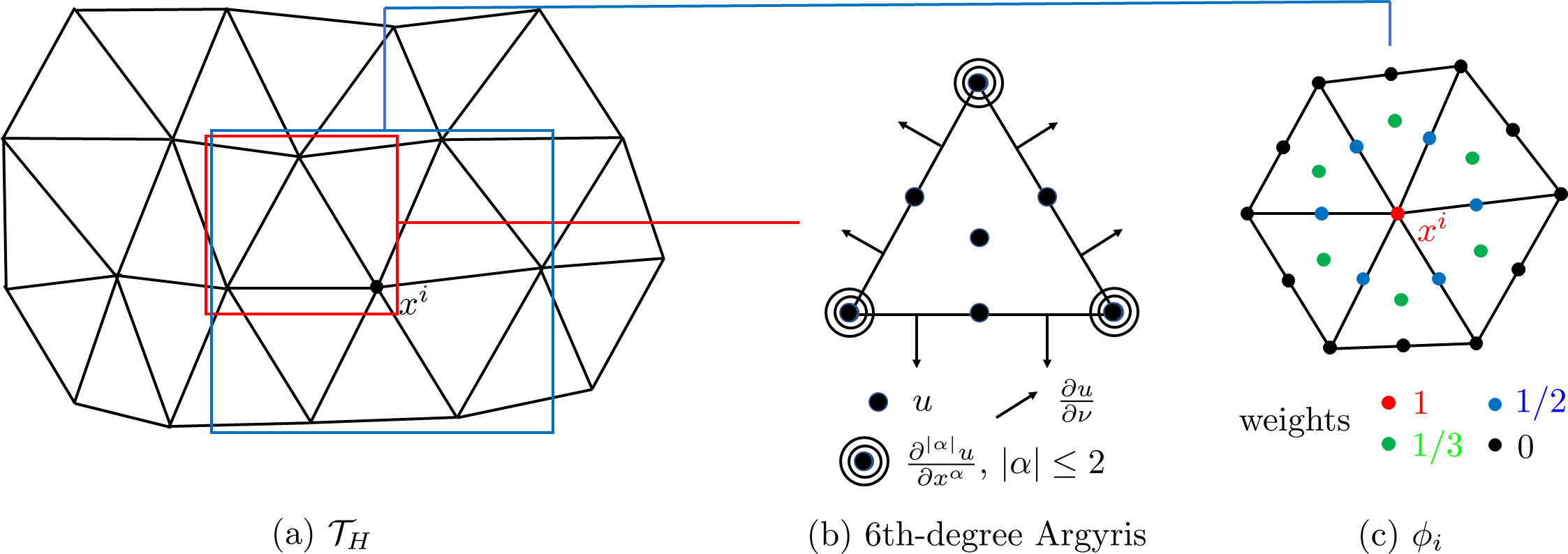}
    \caption{Construction of a collection $\{ \phi_i \}_{i \in \cI_H}$ satisfying~\eqref{coarse_basis} by using the sixth-degree Argyris finite element space~\cite{AFS:1968} on a triangular coarse grid $\cT_H$~($d = 2$, $m = 2$).}
    \label{Fig:coarse_Argyris}
\end{figure}

\begin{example}
We again consider the case where $d = 2$ and $m = 2$.
Let $\widehat{S}_H$ denote the sixth-degree Argyris finite element space~\cite{AFS:1968}, defined on a triangulation $\cT_H$ comprising quasi-uniform triangles, as illustrated in \cref{Fig:coarse_Argyris}(a, b).
The sixth-degree Argyris elements include degrees of freedom for function evaluation at vertices, midpoints of edges, and centers of triangles.
Within each $\omega_i$, $i \in \cI_H$, the coarse basis function $\phi_i \in V_0$ is constructed as a linear combination of global $\widehat{S}_H$-basis functions associated with function evaluation adjacent to $x^i$.
Specifically, the weights are assigned as follows: $1$ for the basis function at $x^i$, $1/2$ for edge basis functions, and $1/3$ for center basis functions; see \cref{Fig:coarse_Argyris}(c).

It is worth noting that this example can be generalized to arbitrary values of $d$ and $m$, as the sixth-degree Argyris element is a specific instance of the general construction of conforming finite elements introduced in~\cite{HLW:2023}.
\end{example}

\begin{figure}
    \centering
    \includegraphics[width=0.95\textwidth]{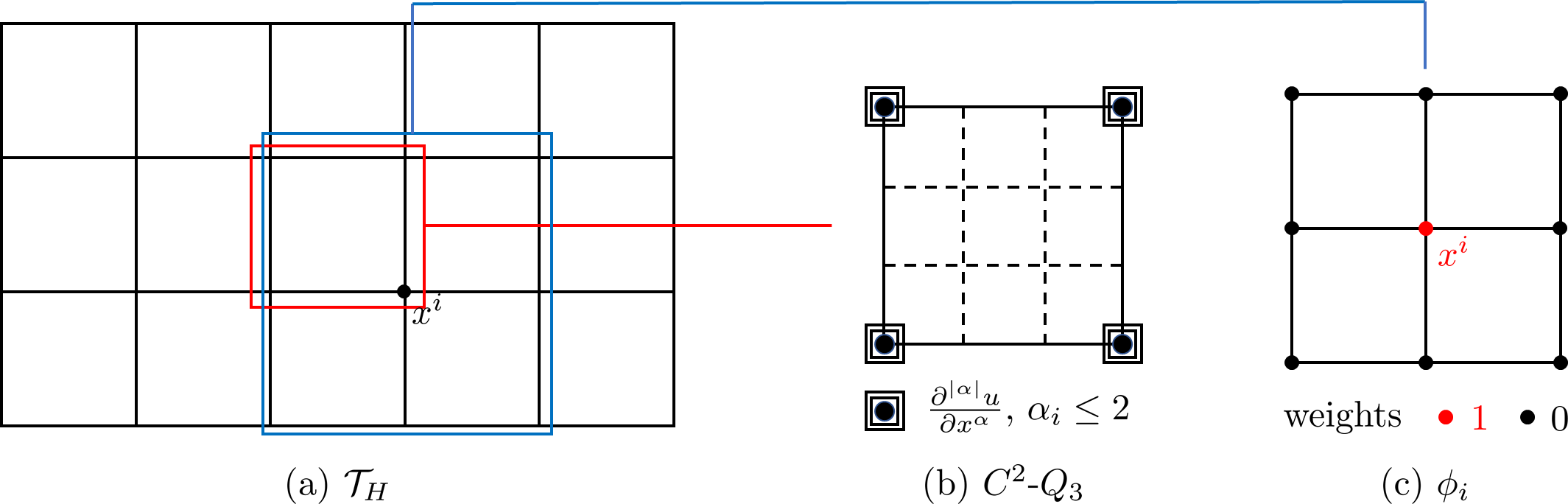}
    \caption{Construction of a collection $\{ \phi_i \}_{i \in \cI_H}$ satisfying~\eqref{coarse_basis} by using the $C^2$-$Q_3$ finite element space~\cite{HZ:2015} on a rectangular coarse grid $\cT_H$~($d = 2$, $m = 3$).}
    \label{Fig:coarse_C2Q3}
\end{figure}

\begin{example}
\label{Ex:C2Q3}
We consider the case where $d = 2$ and $m =3$.
If $\cT_H$ is given by a rectangular grid~(see \cref{Fig:coarse_C2Q3}(a)), we can define $\widehat{S}_H$ as the conforming $C^2$-$Q_3$ finite element space proposed in~\cite{HZ:2015}.
Since all degrees of freedom of the $C^2$-$Q_3$ element corresponding to function evaluation are located at vertices~(see \cref{Fig:coarse_C2Q3}(b)), the coarse basis function $\phi_i \in V_0$, $i \in \cI_H$, simply agrees with the global $C^2$-$Q_3$-basis function at $x^i$, as depicted in \cref{Fig:coarse_C2Q3}(c).

This example can also be extended to arbitrary values of $d$ and $m$ by utilizing the construction of conforming finite elements on rectangular grids introduced in~\cite{HZ:2015}.
\end{example}

\section{Applications}
\label{Sec:Applications}
In this section, we present applications of the two-level additive Schwarz preconditioners proposed in this paper to various finite element discretizations of the $2m$th-order elliptic problem~\eqref{model_cont}.
We also provide numerical results to support our theoretical findings.
All codes used in our numerical experiments were implemented using MATLAB R2022b and executed on a desktop computer equipped with an AMD Ryzen 5 5600X CPU~(3.7GHz, 6C), 40GB RAM, and the Windows 10 Pro operating system.

\begin{figure}
    \centering
    \includegraphics[width=0.83\textwidth]{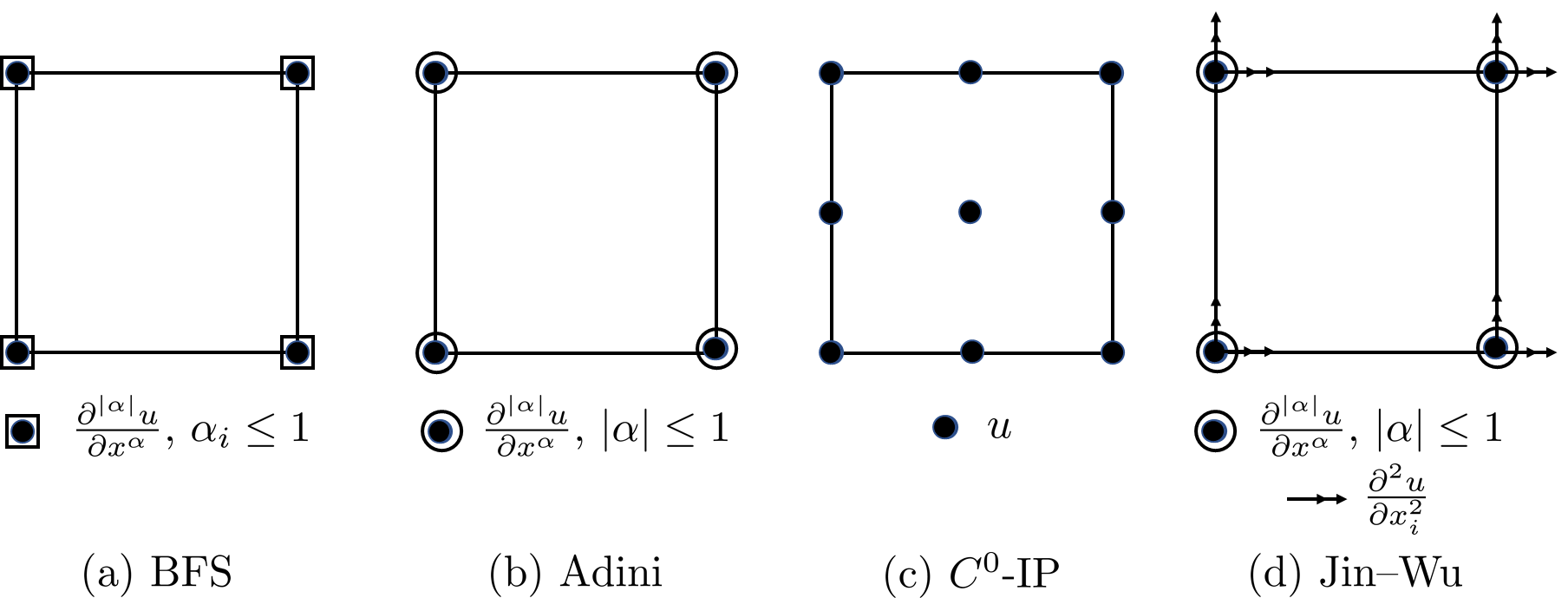}
    \caption{Reference elements of finite element discretizations used in \cref{Sec:Applications}: \textbf{(a)}~Bogner--Fox--Schmit~\cite{BFS:1965,Valdman:2020}, \textbf{(b)}~Adini~\cite{HS:2013,LL:1975}, \textbf{(c)} $C^0$ interior penalty~\cite{BW:2005,EGHLMT:2002}, and \textbf{(d)} Adini-type sixth-order nonconforming elements proposed by Jin and Wu~\cite{JW:2023}.}
    \label{Fig:reference_elements}
\end{figure}

\subsection{Fourth-order conforming elements}
First, we consider a BFS~\cite{BFS:1965,Valdman:2020,WSX:2007} finite element discretization of the fourth-order problem in two dimensions~($m = 2$, $d = 2$).
That is, the finite element space $S_h$ is given by
\begin{align*}
S_T &= \mathbb{Q}_3 (T), \quad T \in \cT_h, \\
S_h &= \left\{ u \in H_0^2 (\Omega) : u|_T \in S_T \text{ for all } T \in \cT_h \right\},
\end{align*}
where $\mathbb{Q}_3$ denotes the tensor product of $\mathbb{P}_3$. 
The reference element for the BFS element is depicted in \cref{Fig:reference_elements}(a).
Since we adopt a conforming element, we set the bilinear form $a_h (\cdot, \cdot)$ in the discrete problem~\eqref{model_FEM} as the same on as in the continuous problem~\eqref{model_cont}, i.e.,
\begin{equation*}
a_h(u,v) = \int_{\Omega} \nabla^2 u : \nabla^2 v \,dx,
\quad u, v \in S_h.
\end{equation*}
We construct the coarse space $V_0$ as explained in \cref{Ex:BFS}, with $\widehat{S}_H$ given by the BFS finite element space on the coarse mesh $\cT_H$.

To apply the convergence theory developed in this paper, we need to verify \cref{Ass:FEM,Ass:local,Ass:coarse}.
We set
\begin{equation*}
    \| u \|_h = \sqrt{a_h (u, u)} = | u |_{H^2 (\Omega)},
    \quad u \in H_0^2 (\Omega).
\end{equation*}
In addition, we set $\tS_h = S_h$ and define the enriching operator $E_h \colon S_h \rightarrow \tS_h$ as the identity operator.
Then \cref{Ass:FEM} obviously holds.
Verification of \cref{Ass:local} can be done by combining the argument given in~\cite[Theorem~2.3]{Zhang:1996} with \cref{Lem:trace}.
Finally, thanks to the polynomial approximation theory in Sobolev spaces~(see, e.g.,~\cite{BS:2008,Ciarlet:2002}), \cref{Ass:coarse} can be achieved by setting the coarse prolongation operator $R_0^T$ by the nodal interpolation operator.
Consequently, \cref{Thm:kappa} implies that the convergence rate of an iterative algorithm preconditioned by the two-level additive Schwarz preconditioner is uniformly bounded when $H/\delta$ is fixed.

Now, we present numerical results.
In~\eqref{model_cont} with $m = 2$ and $d = 2$, we set $\Omega = (0,1)^2$ and $f$ such that the exact solution $u$ is given by $u(x_1, x_2) = x_1^2 (1-x_1)^2 \sin^2 \pi x_2$.
We use the same mesh and domain decomposition settings as in~\cite[Section~6]{Park:2023}; let $\cT_H$ be a coarse triangulation of $\Omega$ consisting of $N = 1/H \times 1/H$ square elements, and let $\cT_h$ be a refinement of $\cT_H$ consisting of $1/h \times 1/h$ square elements~($0 < h < H < 1$).
By extending each coarse element in $\cT_H$ to include its surrounding layers of $\cT_h$-elements with width $\delta$, we construct an overlapping domain decomposition $\{ \Omega_k \}_{k=1}^N$ of $\Omega$.

\begin{figure}
\centering
\subfloat[][Comparison]{ \includegraphics[width=0.31\linewidth]{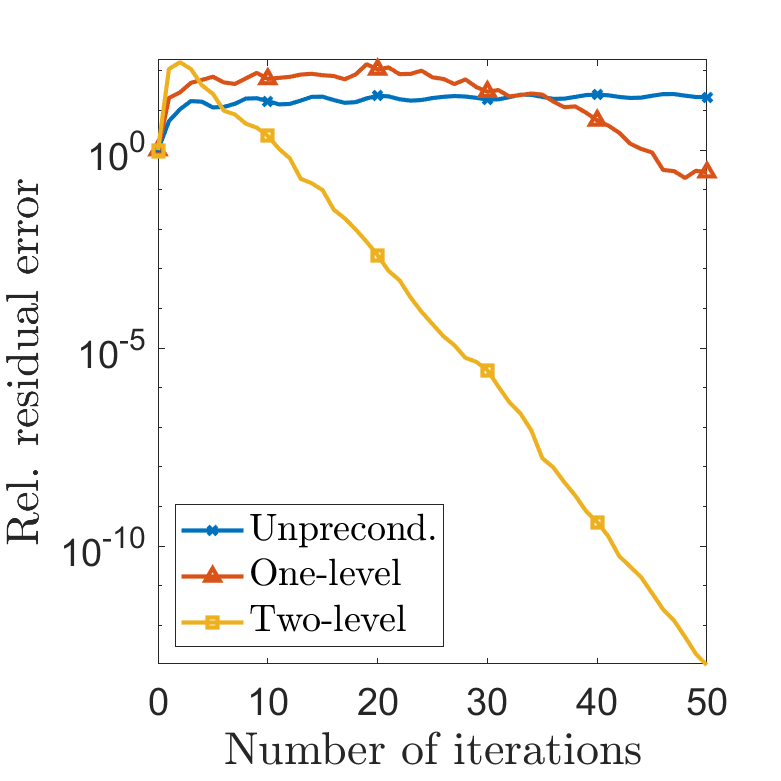} }
\subfloat[][$\delta = 2^1 h$~($H/\delta = 2^3$)]{ \includegraphics[width=0.31\linewidth]{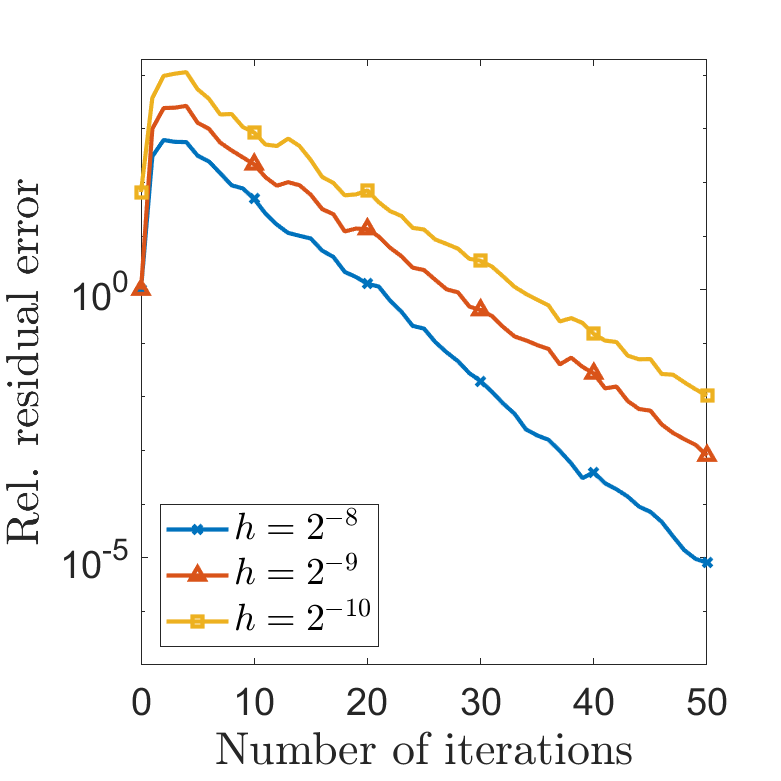} }
\subfloat[][$\delta = 2^2 h$~($H/\delta = 2^2$)]{ \includegraphics[width=0.31\linewidth]{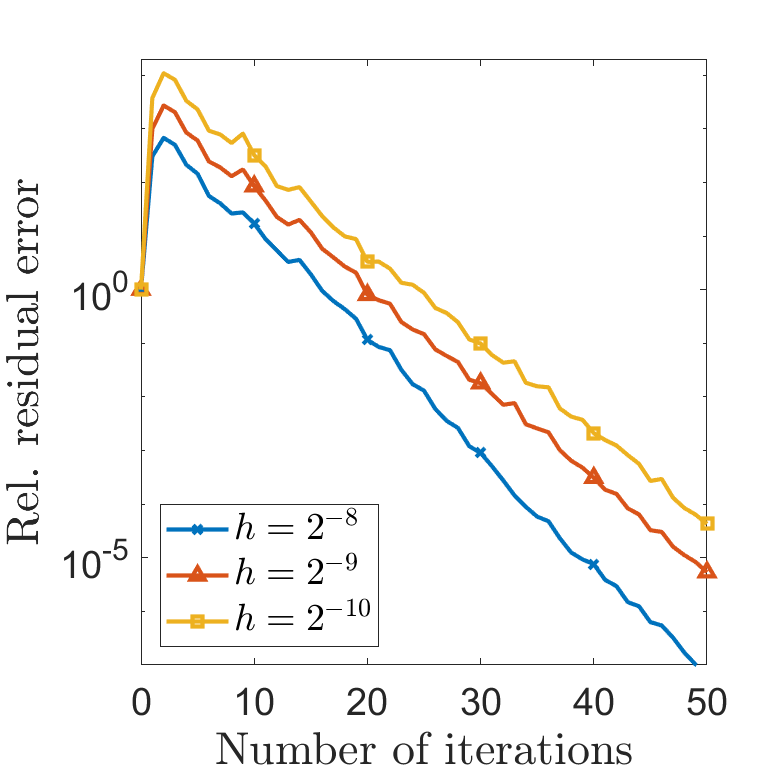} }
\caption{Relative residual error~\eqref{error} of the conjugate gradient method preconditioned by the proposed two-level additive Schwarz preconditioner for solving the fourth-order Bogner--Fox--Schmit finite element discretization.
\emph{\textbf{(a)}} Comparison with the cases of no preconditioner and the one-level preconditioner~($h = 2^{-7}$, $H = 2^{-3}$, $\delta = 2^{-5}$).
\emph{\textbf{(b, c)}} Various mesh sizes $h$~($H/h = 2^4$).}
\label{Fig:BFS}
\end{figure}

To observe the convergence behavior of iterative algorithms, we plot the relative residual error
\begin{equation}
\label{error}
\frac{\| Au^{(n)} - f \|_{\ell^2}}{\| Au^{(0)} - f \|_{\ell^2}}
\end{equation}
at each iteration, where $A$ and $f$ were given in~\eqref{model_linear}, and $n$ denotes the iteration count.
In \cref{Fig:BFS}(a), the convergence curves of the conjugate gradient method without preconditioner and with one- and two-level additive Schwarz preconditioners are plotted.
We readily observe that the convergence rate of the two-level preconditioner is much faster than the other ones, which highlights the improvement made by adding the coarse space.
In \cref{Fig:BFS}(b, c), the convergence curves for the two-level additive Schwarz preconditioners are presented, under varying $h$, $H$ and $\delta$, keeping $H/h = 2^4$.
We observe that the convergence curves for the cases $h = 2^{-9}$ and $h = 2^{-10}$ are parallel, which implies that the convergence rate do not deteriorate even if we decrease the mesh size $h$.
This numerically verifies \cref{Thm:kappa}, which states that the condition number of $M^{-1} A$ is uniformly bounded if $H / \delta$ is kept constant.

\begin{remark}
\label{Rem:jump}
In \cref{Fig:BFS}(b, c), we observe increases in the relative residual errors~\eqref{error} during the initial iterations of the conjugate gradient method preconditioned by the proposed two-level additive Schwarz preconditioner.
Furthermore, the magnitude of these jumps increases as $h$ decreases.
Consequently, the number of iterations required to achieve a certain level of relative residual error slightly increases as $h$ decreases, even though the convergence curves remain parallel.
A rigorous analysis of these jumps in the relative residual error remains an open problem.
\end{remark}

\subsection{Fourth-order nonconforming elements}
As the second example, we consider a nonconforming finite element method; we assume that the fourth-order problem in two dimensions~($m = 2$, $d = 2$) is discretized by the Adini element~\cite{HS:2013,LL:1975,WSX:2007}, in which the finite element space $S_h$ is given by
\begin{align*}
S_T &= \mathbb{P}_3 (T) \oplus \operatorname{span} \left\{  x_1^3 x_2, x_1 x_2^3 \right\}, \quad T \in \cT_h, \\
S_h &= \bigg\{ u \in L^2 (\Omega) : u|_T \in S_T \text{ for all } T \in \cT_h,\text{ } u, \frac{\partial u}{\partial x_1}, \text{ and } \frac{\partial u}{\partial x_2}  \text{ are continuous} \\
&\quad\quad \text{ at the vertices of } \cT_h \text{ and vanish at the vertices along } \partial \Omega \bigg\}.
\end{align*}
The Adini reference element is depicted in \cref{Fig:reference_elements}(b).
As in~\cite{Brenner:1996}, we define the bilinear form $a_h (\cdot, \cdot)$ as a broken version of $a( \cdot, \cdot)$ as follows:
\begin{equation}
\label{a_h_Adini}
a_h (u,v) = \sum_{T \in \cT_h} \int_T \nabla^2 u : \nabla^2 v \,dx,
\quad u,v \in S_h.
\end{equation}
We define the coarse space $V_0$ in the same manner as in the previous example, i.e., by setting $\widehat{S}_h$ to be the BFS finite element space on $\cT_H$.

Next, we verify \cref{Ass:FEM,Ass:local,Ass:coarse} to apply the convergence theory presented in this paper.
We set
\begin{equation*}
    \| u \|_h = \sqrt{a_h (u, u)},
    \quad u \in S_h + H_0^2 (\Omega).
\end{equation*}
In addition, we define $\widetilde{S}_h$ as the BFS finite element space defined on $\mathcal{T}_h$, and $E_h \colon S_h \rightarrow \widetilde{S}_h$ as the enriching operator introduced in~\cite[Equation~(5.4)]{Brenner:1996}.
Then, invoking~\cite[Lemma~5.1]{Brenner:1996} implies that \cref{Ass:FEM} holds.
For \cref{Ass:local}, one may refer to~\cite[Equation~(8.16)]{Brenner:1996}.
Finally, if we set the coarse prolongation operator $R_0^T$ by the nodal interpolation operator, then \cref{Ass:coarse} can be proven by using the standard polynomial approximation theory~\cite{BS:2008,Ciarlet:2002}.
Therefore, \cref{Thm:kappa} ensures that the convergence rate of an iterative algorithm preconditioned by the two-level additive Schwarz preconditioner is uniformly bounded when $H/\delta$ is fixed.

\begin{figure}
\centering
\subfloat[][Comparison]{ \includegraphics[width=0.31\linewidth]{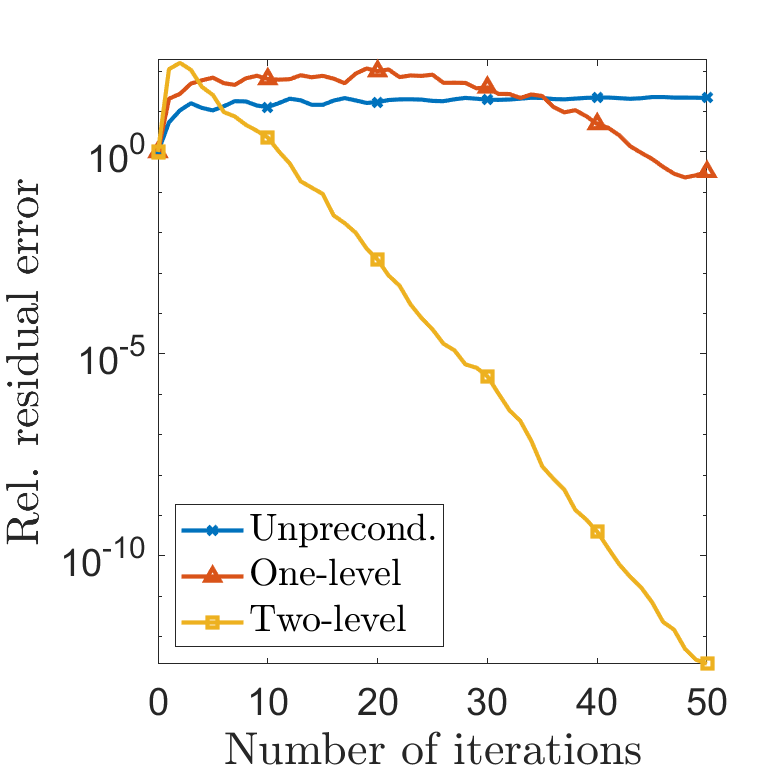} }
\subfloat[][$\delta = 2^1 h$~($H/\delta = 2^3$)]{ \includegraphics[width=0.31\linewidth]{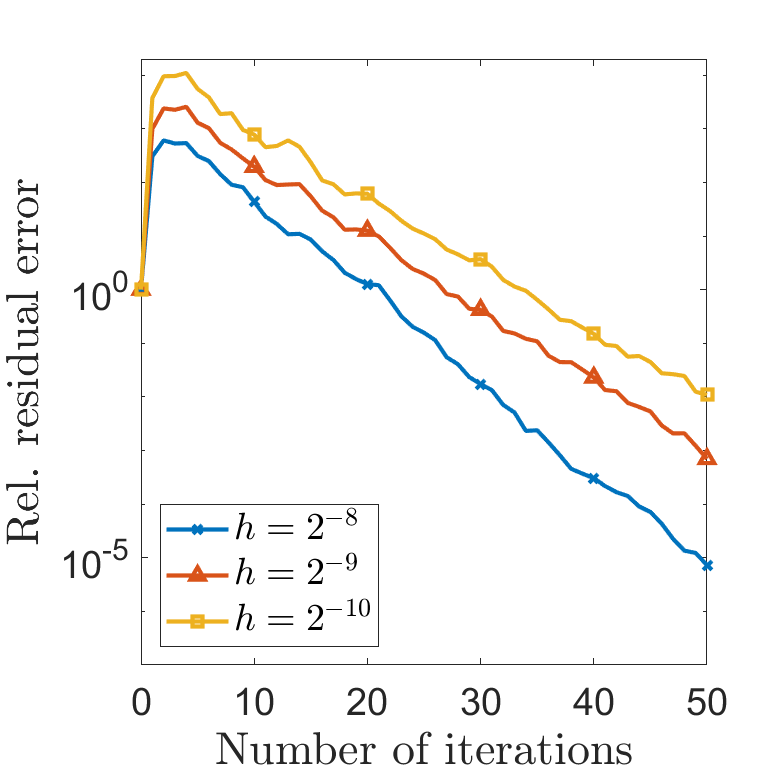} }
\subfloat[][$\delta = 2^2 h$~($H/\delta = 2^2$)]{ \includegraphics[width=0.31\linewidth]{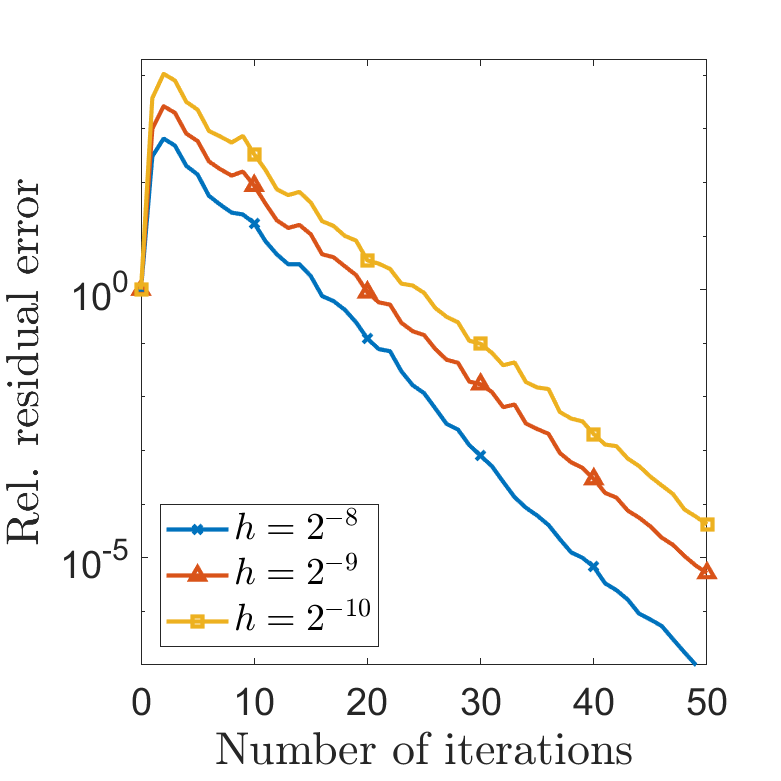} }
\caption{Relative residual error~\eqref{error} of the conjugate gradient method preconditioned by the proposed two-level additive Schwarz preconditioner for solving the fourth-order Adini finite element discretization.
\emph{\textbf{(a)}} Comparison with the cases of no preconditioner and the one-level preconditioner~($h = 2^{-7}$, $H = 2^{-3}$, $\delta = 2^{-5}$).
\emph{\textbf{(b, c)}} Various mesh sizes $h$~($H/h = 2^4$).}
\label{Fig:Adini}
\end{figure}

We conduct numerical experiments using the same configurations as in the previous example.
The numerical results for the Adini element are presented in \cref{Fig:Adini}.
We can deduce the same conclusion as in the BFS case.
We observe a notable enhancement in the convergence behavior of the conjugate gradient method due to the presence of the coarse space, as depicted in \cref{Fig:Adini}(a).
Furthermore, in \cref{Fig:Adini}(b, c), we note that the convergence rate remains unaffected even as $h$ decreases, provided $H / \delta$ is held constant.
This observation verifies \cref{Thm:kappa} and highlights the numerical scalability of the proposed two-level preconditioner when applied to the Adini element.

\subsection{Fourth-order interior penalty methods}
The coarse space construction proposed in this paper is also applicable for discontinuous Galerkin-type discretizations.
As an illustrative example, we consider the fourth-order problem in two dimensions~($m = 2$, $d = 2$) discretized by the $C^0$ interior penalty method introduced in~\cite{BS:2005,EGHLMT:2002}.
That is, we have
\begin{align*}
S_T &= \mathbb{Q}_2 (T) , \quad T \in \cT_h, \\
S_h &= \left\{ u \in C^0 (\Omega) : u|_T \in S_T \text{ for all } T \in \cT_h,\text{ } u = 0 \text{ on } \partial \Omega \right\}.
\end{align*}
The reference element for the $C^0$ interior penalty method is depicted in \cref{Fig:reference_elements}(c).
The bilinear form $a_h (\cdot, \cdot)$ is given by
\begin{multline}
\label{a_h_C0IP}
a_h (u,v) = \sum_{T \in \cT_h} \int_T \nabla^2 u : \nabla^2 v \,dx + \sum_{e \in \mathcal{E}_h} \int_e \left( \left\{\!\!\!\left\{ \frac{\partial^2 u}{\partial \nu^2} \right\}\!\!\!\right\} \left[\!\!\left[\frac{\partial v}{\partial \nu } \right]\!\!\right] + \left\{\!\!\!\left\{ \frac{\partial^2 v}{\partial \nu^2} \right\}\!\!\!\right\} \left[\!\!\left[\frac{\partial u}{\partial \nu } \right]\!\!\right] \right) \,ds \\
+ \sum_{e \in \mathcal{E}_h} \frac{\eta}{|e|} \int_e \left[\!\!\left[\frac{\partial u}{\partial \nu } \right]\!\!\right] \left[\!\!\left[\frac{\partial v}{\partial \nu } \right]\!\!\right] \,ds,
\end{multline}
where $\mathcal{E}_h$ denotes the set of all edges of $\cT_h$, and $\eta$ is a positive penalty parameter.
The jumps $[\![ \cdot ]\!]$ and the averages $\{\!\!\{ \cdot \}\!\!\}$ in~\eqref{a_h_C0IP} are defined as follows.
For an interior edge $e$ in $\mathcal{E}_h$ shared by two elements $T_+$ and $T_-$ in $\cT_h$, let $\nu_e$ be the unit normal vector of $e$ that points from $T_-$ to $T_+$.
Then we define
\begin{equation*}
\left[\!\!\left[\frac{\partial u}{\partial \nu } \right]\!\!\right] = \frac{\partial u|_{T_+}}{\partial \nu_e} - \frac{\partial u|_{T_-}}{\partial \nu_e}, \quad
\left\{\!\!\!\left\{ \frac{\partial^2 u}{\partial \nu^2} \right\}\!\!\!\right\} = \frac{1}{2 } \left( \frac{\partial^2 u |_{T_+}}{\partial \nu_e^2} + \frac{\partial^2 u|_{T_-}}{\partial \nu_e^2} \right).
\end{equation*}
For a boundary edge $e$ on $\partial \Omega$, let $\nu_e$ denote the outward unit normal vector of $e$, and we define
\begin{equation*}
\left[\!\!\left[\frac{\partial u}{\partial \nu } \right]\!\!\right] = - \frac{\partial u}{\partial \nu_e}, \quad
\left\{\!\!\!\left\{ \frac{\partial^2 u}{\partial \nu^2} \right\}\!\!\!\right\} = \frac{\partial^2 u}{\partial \nu_e^2}.
\end{equation*}
If $\eta$ is sufficiently large, then $a_h (\cdot, \cdot)$ becomes coercive~\cite{BS:2005}.

We set
\begin{equation*}
\| u \|_h = \sqrt{\sum_{T \in \cT_h} | u |_{H^2 (T)}^2 + \sum_{e \in \mathcal{E}_h} \frac{1}{|e|} \left\| \frac{\partial u}{\partial \nu} \right\|_{L^2 (e)}^2},
\quad u \in S_h + H_0^2 (\Omega).
\end{equation*}
The conforming space $\widetilde{S}_h$ and the enriching operator $E_h \colon S_h \rightarrow \widetilde{S}_h$ are defined as in~\cite{BS:2005}.
Again, the coarse space $V_0$ is defined as the one given in \cref{Ex:BFS}, and the coarse prolongation operator $R_0^T$ is defined as the nodal interpolation operator.
Then \cref{Ass:FEM,Ass:local,Ass:coarse} can be verified by referring to~\cite[Lemma~3.4, Equation~(5.8), and Lemma~3.2]{BW:2005}, respectively.

\begin{figure}
\centering
\subfloat[][Comparison]{ \includegraphics[width=0.31\linewidth]{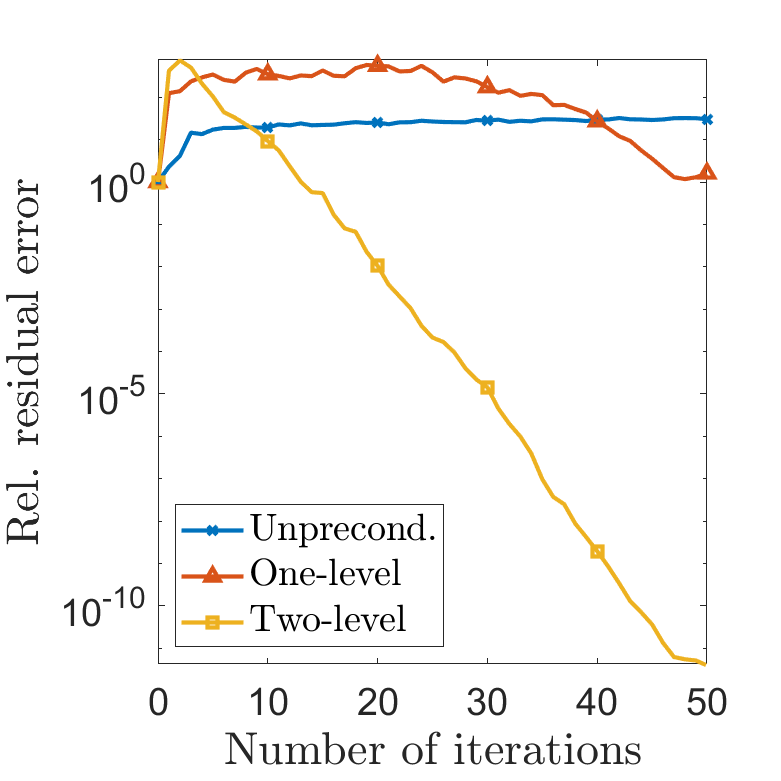} }
\subfloat[][$\delta = 2^1 h$~($H/\delta = 2^3$)]{ \includegraphics[width=0.31\linewidth]{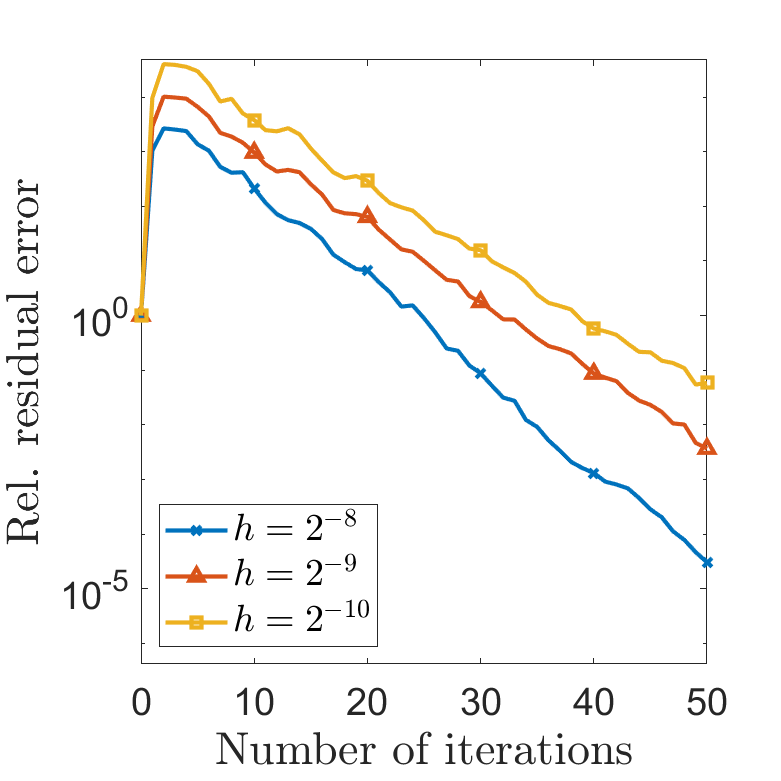} }
\subfloat[][$\delta = 2^2 h$~($H/\delta = 2^2$)]{ \includegraphics[width=0.31\linewidth]{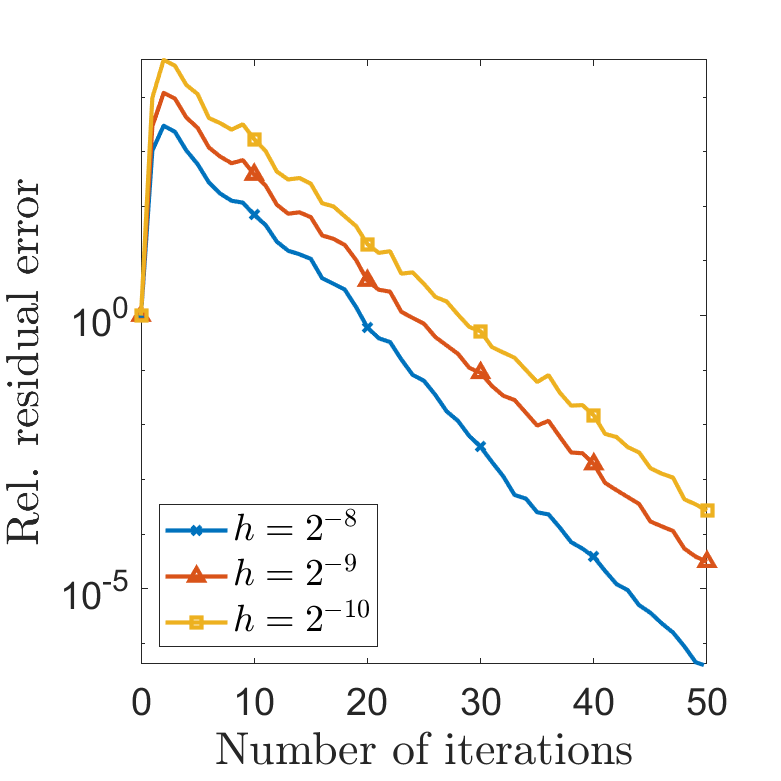} }
\caption{Relative residual error~\eqref{error} of the conjugate gradient method preconditioned by the proposed two-level additive Schwarz preconditioner for solving the fourth-order $C^0$ interior penalty method~($\eta =5$).
\emph{\textbf{(a)}} Comparison with the cases of no preconditioner and the one-level preconditioner~($h = 2^{-7}$, $H = 2^{-3}$, $\delta = 2^{-5}$).
\emph{\textbf{(b, c)}} Various mesh sizes $h$~($H/h = 2^4$).}
\label{Fig:C0IP}
\end{figure}

Under the same configurations as in the previous examples, we present numerical results for the fourth-order $C^0$ interior penalty method in \cref{Fig:C0IP}.
Consistently with the cases of BFS and Adini elements, we observe the numerical scalability of the proposed two-level additive Schwarz preconditioner in the sense that the convergence rate does not deteriorate with decreasing $h$ as long as $H/ \delta$ is fixed.

\subsection{Sixth-order nonconforming elements}
To further validate the applicability of the proposed two-level additive Schwarz preconditioner for higher-order problems, we consider the sixth-order instance~($m = 3$, $d = 2$) of~\eqref{model_FEM}.
We assume that the problem is discretized by the Adini-type finite element proposed by Jin and Wu~\cite{JW:2023}, which is given by
\begin{align*}
S_T &= \operatorname{span} \left\{ \mathbb{Q}_1 (T) \cdot \left\{ 1,  x_1^2, x_2^2, x_1^4, x_2^4 \right\} \right\}, \quad T \in \cT_h, \\
S_h &= \bigg\{ u \in L^2 (\Omega) : u|_T \in S_T \text{ for all } T \in \cT_h,\text{ } u, \frac{\partial u}{\partial x_1}, \frac{\partial u}{\partial x_2}, \frac{\partial^2 u}{\partial x_1^2}, \text{ and } \frac{\partial^2 u}{\partial x_2^2}  \text{ are} \\
&\quad\quad \text{ continuous at the vertices of } \cT_h \text{ and vanish at the vertices along } \partial \Omega \bigg\},
\end{align*}
where $\cdot$ denotes the collection of entrywise products of two sets.
The Jin--Wu reference element is depicted in \cref{Fig:reference_elements}(d).
As in~\cite{JW:2023}, we define the bilinear form $a_h (\cdot, \cdot)$ as
\begin{equation*}
a_h (u,v) = \sum_{T \in \mathcal{T}_h} \int_T \nabla^3 u : \nabla^3 v \,dx,
\quad u,v \in S_h.
\end{equation*}
To construct the coarse space $V_0$, we set $\widehat{S}_H$ as the $C^2$-$Q_3$ finite element space~\cite{HZ:2015} presented in \cref{Ex:C2Q3}, and follow the procedure described in \cref{Sec:Universal}.

To verify \cref{Ass:FEM,Ass:local,Ass:coarse} for the Jin--Wu element, similar to the case of the fourth-order Adini element, we set
\begin{equation*}
\| u \|_h = \sqrt{a_h (u,u)}, \quad u \in S_h + H_0^3 (\Omega).
\end{equation*}
Then \cref{Ass:FEM} is satisfied with the conforming space $\widetilde{S}_h$ and the enriching operator $E_h \colon S_h \rightarrow \widetilde{S}_h$ defined in the same manner as in~\cite[Lemma~3.1]{JW:2023}.
Moreover, by using a similar argument as in~\cite[Lemma~7.3]{Brenner:1996} and \cref{Lem:trace}, we can verify \cref{Ass:local} without difficulty.
Finally, the standard polynomial approximation theory~\cite{BS:2008,Ciarlet:2002} implies that \cref{Ass:coarse} holds with the coarse prolongation operator $R_0^T$ given by the nodal interpolation operator~(cf.~\cite[Theorem~3.1]{JW:2023}).

In numerical experiments, we set $\Omega = (0,1)^2$ and $f$, such that the exact solution is given by $u(x_1, x_2) = x_1^3 (1-x_1)^3 \sin^3 \pi x_2$, in~\eqref{model_cont} with $m = 3$ and $d = 2$.
We maintain the same mesh and domain decomposition settings as those utilized in the previous examples.

\begin{figure}
\centering
\subfloat[][Comparison]{ \includegraphics[width=0.31\linewidth]{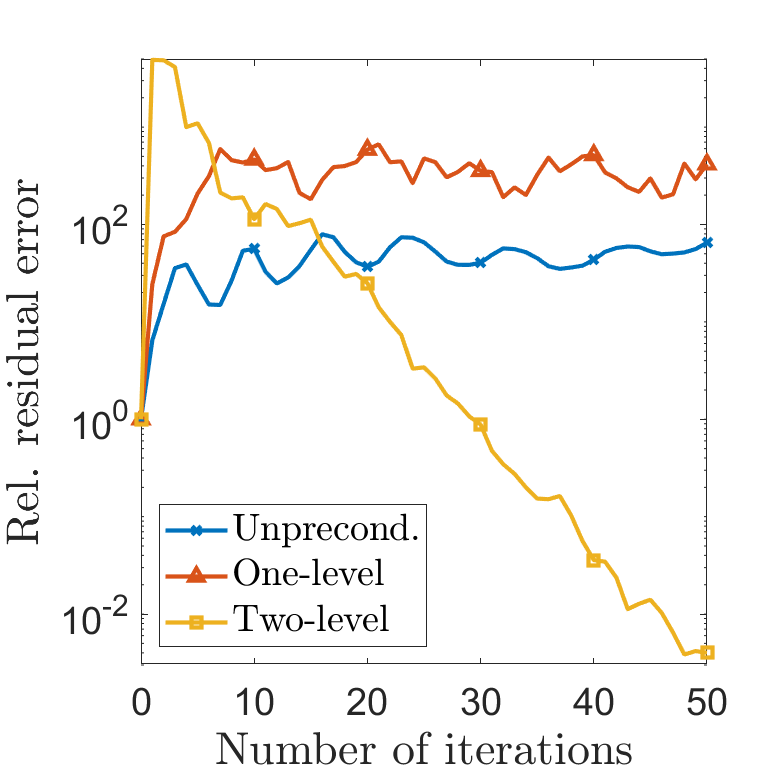} }
\subfloat[][$\delta = 2^1 h$~($H/\delta = 2^3$)]{ \includegraphics[width=0.31\linewidth]{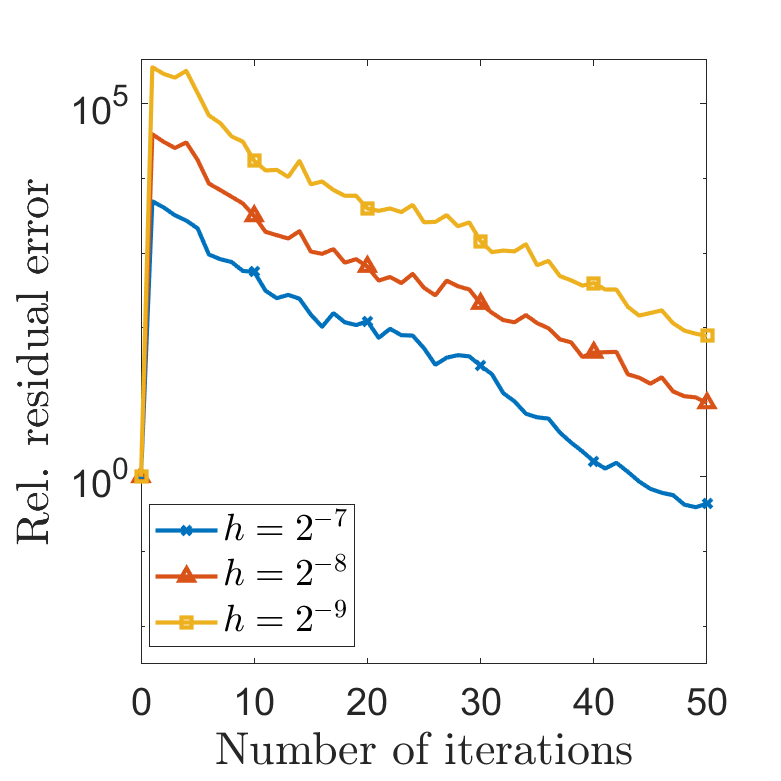} }
\subfloat[][$\delta = 2^2 h$~($H/\delta = 2^2$)]{ \includegraphics[width=0.31\linewidth]{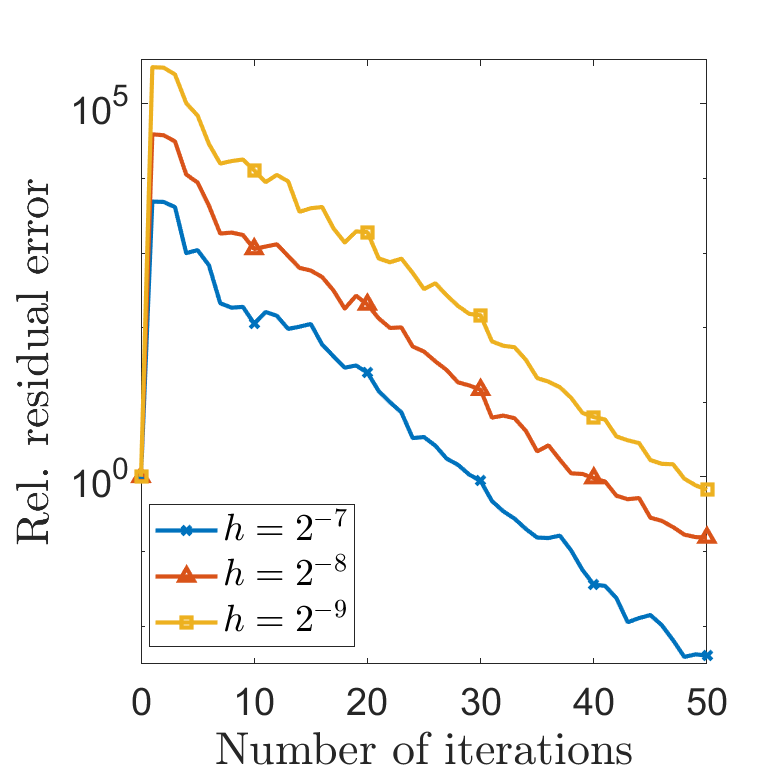} }
\caption{Relative residual error~\eqref{error} of the conjugate gradient method preconditioned by the proposed two-level additive Schwarz preconditioner for solving the sixth-order Jin--Wu finite element discretization.
\emph{\textbf{(a)}} Comparison with the cases of no preconditioner and the one-level preconditioner~($h = 2^{-7}$, $H = 2^{-3}$, $\delta = 2^{-5}$).
\emph{\textbf{(b, c)}} Various mesh sizes $h$~($H/h = 2^4$).}
\label{Fig:JW}
\end{figure}

In \cref{Fig:JW}(a), we present the convergence curves of the conjugate gradient method both without preconditioner and with one- and two-level additive Schwarz preconditioners.
Analogous to the fourth-order cases, we observe a significant enhancement in the convergence rate upon integrating the proposed coarse space into the additive Schwarz preconditioner.
Moreover, \cref{Fig:JW}(b, c) numerically shows that the proposed two-level additive Schwarz preconditioner is scalable when applied to the Jin--Wu element.
The convergence rate of the preconditioned method is uniformly bounded as $h$ decreases keeping $H/\delta$ constant.

\section{Conclusion}
\label{Sec:Conclusion}
In this paper, we proposed a novel construction of two-level overlapping Schwarz preconditioners, which can be applied to any finite element discretization of $2m$th-order elliptic problems that satisfies typical assumptions.
For any finite element discretization, a coarse space can be constructed in a unified way based on a conforming finite element space defined on a coarse mesh.
We proved that the condition number of the preconditioned stiffness matrix achieves a bound that depends on $H/\delta$ only, which implies the scalability of iterative methods.
We presented applications of the proposed two-level overlapping Schwarz preconditioners to various finite elements, such as the BFS, Adini, fourth-order $C^0$ interior penalty, and Jin--Wu elements, and consistently observed an improvement of the convergence rate and the scalability.

We conclude the paper with a discussion of several intriguing topics for future research.
While the construction of coarse spaces proposed in this paper utilizes conforming finite elements, it is interesting to further investigating whether nonconforming finite elements with fewer degrees of freedom could be employed, as the computational cost for coarse problems remains a bottleneck in parallel computation and it is desirable to reduce it~\cite{DW:2017}.
Meanwhile, since the heterogeneity of problems is an important issue in many applications~(see, e.g.,~\cite{EGH:2013,HW:1997,LMP:2022}), it is crucial to consider how to extend the proposed construction of coarse spaces to problems with heterogeneous coefficients.
In addition, extending our approach to higher-order variants of nonlinear and nonsmooth problems~\cite{LP:2022,Park:2023,Park:2024} is an interesting topic for future research.

\appendix
\section{Friedrichs inequality for \texorpdfstring{$H^m$}{Hm}-functions}
In this appendix, for the sake of completeness, we provide the Friedrichs inequality for $H^m$-functions used throughout the paper along with its proof.
We first show that any polynomial defined on a bounded polyhedral domain $D$ and satisfying a homogeneous boundary condition on a portion of $\partial D$ must be identically zero.

\begin{lemma}
\label{Lem:polynomial_BC}
Let $D \subset \mathbb{R}^d$ ($d \in \mathbb{Z}_{> 0 })$ be a bounded polyhedral domain, and let $\Gamma \subset \partial D$ have nonvanishing $(d-1)$-dimensional measure.
If an $m$th-degree polynomial $p \in \mathbb{P}_{m} (D)$ satisfies
\begin{equation*}
p = \frac{\partial p}{\partial \nu} = \dots = \frac{\partial^{m} p}{\partial \nu^{m}} = 0 \quad \text{ on } \Gamma,
\end{equation*}
where $\nu$ is the outward unit normal vector field along $\Gamma$, then $p = 0$.
\end{lemma}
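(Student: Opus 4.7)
The plan is to exploit the fact that a polyhedral boundary is piecewise flat and to reduce the statement to the elementary assertion that a polynomial vanishing on an open subset of its ambient space must be identically zero.

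First I would observe that since $\Gamma$ has nonvanishing $(d-1)$-dimensional measure and $\partial D$ is a finite union of flat faces (pieces of hyperplanes), $\Gamma$ must have nonvanishing measure on at least one face. Hence there is an open (in the relative topology) subset $\Gamma' \subset \Gamma$ contained in a single hyperplane $\Pi$, on which the outward unit normal $\nu$ is a constant vector. After a rigid change of coordinates, I may assume $\nu = e_d$ and $\Pi = \{x_d = 0\}$, so that $\Gamma'$ contains a nonempty $(d-1)$-dimensional ball $B \subset \{x_d = 0\}$.

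Next I would expand the polynomial in powers of $x_d$:
\begin{equation*}
    p(x_1, \dots, x_d) = \sum_{j=0}^{m} x_d^{\,j} \, q_j(x_1, \dots, x_{d-1}),
\end{equation*}
where each $q_j$ is a polynomial of degree at most $m-j$ in $d-1$ variables. A direct computation gives $\partial^j p / \partial \nu^j|_{x_d=0} = j!\, q_j(x_1, \dots, x_{d-1})$. The hypothesis therefore forces $q_j \equiv 0$ on $B$ for every $j = 0, 1, \dots, m$. Since each $q_j$ is a polynomial on $\mathbb{R}^{d-1}$ and $B$ is a nonempty open subset of $\mathbb{R}^{d-1}$, the standard fact that a polynomial vanishing on an open set is identically zero yields $q_j \equiv 0$ on $\mathbb{R}^{d-1}$ for all $j$, whence $p \equiv 0$.

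There is no serious obstacle; the only subtlety is to justify carefully that $\Gamma$ contains an open piece of a single flat face, which follows from the polyhedral assumption by a measure-theoretic pigeonhole (the finitely many faces cover $\partial D$, so one of them carries positive measure of $\Gamma$, and within a single face the relatively open subsets are exactly those one would expect). The rest is bookkeeping with the Taylor expansion in the normal direction.
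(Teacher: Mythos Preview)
Your approach is essentially the paper's: reduce to a single flat face, rotate so the normal is $e_d$, and run a Taylor-type argument in $x_d$. The paper organizes this as an iterative factorization ($p = x_d p_1$, then $p_1 = x_d p_2$, \ldots, citing \cite[Lemma~3.1.10]{BS:2008}), whereas you expand $p = \sum_j x_d^{\,j} q_j$ directly and read off $\partial^j p/\partial\nu^j|_{x_d=0} = j!\,q_j$; the two are equivalent.

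One point needs tightening. A subset of a face with positive $(d-1)$-measure need not contain a relatively open ball (a fat Cantor set on a face is a counterexample), so the passage from ``$\Gamma$ has positive measure on a face'' to ``$\Gamma'$ contains a ball $B$'' is not valid as written, and your closing paragraph does not repair it. The fix is immediate and is exactly what the paper does: instead of seeking an open $B$, simply observe that each $q_j$ vanishes on a subset of $\mathbb{R}^{d-1}$ of positive Lebesgue measure, and a nonzero polynomial cannot do that (its zero set has measure zero). With this adjustment your argument is complete.
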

\begin{proof}
There exists at least one hyperface $F$ of $\partial D$ such that $F \cap \Gamma$ has nonvanishing $(d-1)$-dimensional measure.
Since $p$ vanishes on $F \cap \Gamma$ with nonvanishing measure, it also vanishes on the entire hyperplane $L$ containing $F \cap \Gamma$~\cite{CT:2005}.
We perform an affine change of coordinates such that the hyperplane $L (\hat{x}, x_d) = 0$ is the $\hat{x}$-axis, and $\nu = (0, \dots, 0, 1)$,  where $\hat{x} = (x_1, \dots, x_{d-1})$.
Since $p = 0$ on the $\hat{x}$-axis, by invoking~\cite[Lemma~3.1.10]{BS:2008}, we have $p = x_d p_1$ for some polynomial $p_1$.
Moreover, since $\frac{\partial p}{\partial \nu} = \frac{\partial p}{\partial x_d} = p_1 + x_d \frac{\partial p_1}{\partial x_d}$ vanishes on the $\hat{x}$-axis, invoking~\cite[Lemma~3.1.10]{BS:2008} once again yields that $p_1 = x_d p_2$ for some polynomial $p_2$, i.e., $p = x_d^2 p_2$.
Repeating this procedure, we deduce that $p = x_d^{m+1} p_{m+1}$ for some polynomial $p_{m+1}$.
But since $p \in \mathbb{P}_m (D)$, $p_{m+1}$ must be identically zero, which completes the proof.
\end{proof}

Now, we present the Friedrichs inequality for $H^m$-functions in \cref{Thm:Friedrichs}, which can be proven by using \cref{Lem:polynomial_BC} and following an argument similar to that described in~\cite[Section~1.1.8]{Necas:2012}.
In the following, $\| \cdot \|_{H^m (D)}$ denotes a scaled $H^m$-norm given by
\begin{equation}
\label{scaled_norm}
    \| u \|_{H^m (D)} = \left( \| u \|_{L^2 (D)}^2 + \sum_{j=1}^m H^{2j} | u |_{H^j (D)}^2 \right)^{\frac{1}{2}},
    \quad u \in H^m (D),
\end{equation}
where $H = \operatorname{diam} D$.
This scaled norm is commonly used in the analysis of domain decomposition methods; see, e.g.,~\cite[Equation~(4.4)]{TW:2005}.

\begin{theorem}
\label{Thm:Friedrichs}
Let $D \subset \mathbb{R}^d$ ($d \in \mathbb{Z}_{> 0 })$ be a bounded polyhedral domain, and let $\Gamma \subset \partial D$ have nonvanishing $(d-1)$-dimensional measure.
Then there exist a positive constant $C$, depending only on the shapes of $D$ and $\Gamma$, such that
\begin{equation*}
    \| u \|_{H^m (D)}^2
    \leq C \left( H^{2m} | u|_{H^m (D)}^2 + \sum_{j=0}^{m-1} H^{2j + 1} \left\| \frac{\partial^j u}{\partial \nu^j} \right\|_{L^2 (\Gamma)}^2 \right),
    \quad u \in H^m (D),
\end{equation*}
where $H = \operatorname{diam} D$, $\| \cdot \|_{H^m (D)}$ is the scaled $H^m$-norm given in~\eqref{scaled_norm}, and $\nu$ is the outward unit normal vector field along $\Gamma$.
\end{theorem}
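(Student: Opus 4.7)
The plan is to follow the standard compactness--contradiction argument for Friedrichs-type inequalities outlined in \cite[Section~1.1.8]{Necas:2012}, with the polynomial rigidity lemma \cref{Lem:polynomial_BC} serving as the non-trivial algebraic input. First, I would reduce to the unit-diameter setting via the affine change of variables $\hat{x} = x/H$, which maps $D$ to $\hat{D}$ with $\operatorname{diam} \hat{D} = 1$ and $\Gamma$ to some $\hat{\Gamma} \subset \partial \hat{D}$. Writing $\hat{u}(\hat{x}) = u(H\hat{x})$, a direct computation shows that each term in the scaled norm $\|u\|_{H^m(D)}^2$, the top seminorm $H^{2m}|u|_{H^m(D)}^2$, and every boundary term $H^{2j+1}\|\partial^j u/\partial \nu^j\|_{L^2(\Gamma)}^2$ carries the same scaling factor $H^d$. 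Thus the claim is equivalent to the unscaled inequality
\begin{equation*}
\|\hat{u}\|_{H^m(\hat{D})}^2 \leq C \left( |\hat{u}|_{H^m(\hat{D})}^2 + \sum_{j=0}^{m-1} \left\|\frac{\partial^j \hat{u}}{\partial \nu^j}\right\|_{L^2(\hat{\Gamma})}^2 \right),
\end{equation*}
and it suffices to establish this on the fixed domain $\hat{D}$.

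Next, I would argue by contradiction. Suppose no such $C$ exists; then there is a sequence $\{\hat{u}_n\} \subset H^m(\hat{D})$ with $\|\hat{u}_n\|_{H^m(\hat{D})} = 1$ while $|\hat{u}_n|_{H^m(\hat{D})}^2 + \sum_{j=0}^{m-1}\|\partial^j \hat{u}_n/\partial \nu^j\|_{L^2(\hat{\Gamma})}^2 \to 0$. By Rellich--Kondrachov, a subsequence converges strongly in $H^{m-1}(\hat{D})$ to some $\hat{u}$; boundedness in $H^m(\hat{D})$ passes to a further subsequence converging weakly in $H^m(\hat{D})$ to the same $\hat{u}$. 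Weak lower semicontinuity of $|\cdot|_{H^m(\hat{D})}$ forces $|\hat{u}|_{H^m(\hat{D})} = 0$, so $\hat{u} \in \mathbb{P}_{m-1}(\hat{D})$. Strong $H^{m-1}$ convergence and the trace theorem give $\partial^j \hat{u}/\partial \nu^j = 0$ on $\hat{\Gamma}$ for $j = 0,\dots,m-2$, and weak continuity of the trace map $H^m(\hat{D}) \to L^2(\partial \hat{D})$ for the $(m-1)$st normal derivative, combined with $\|\partial^{m-1}\hat{u}_n/\partial \nu^{m-1}\|_{L^2(\hat{\Gamma})} \to 0$, yields $\partial^{m-1}\hat{u}/\partial \nu^{m-1} = 0$ on $\hat{\Gamma}$ as well.

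Since $\hat{u} \in \mathbb{P}_{m-1}(\hat{D}) \subset \mathbb{P}_m(\hat{D})$ automatically satisfies $\partial^m \hat{u}/\partial \nu^m = 0$, all hypotheses of \cref{Lem:polynomial_BC} are met and hence $\hat{u} \equiv 0$. Combined with strong $H^{m-1}$ convergence and $|\hat{u}_n|_{H^m(\hat{D})} \to 0$, this yields $\|\hat{u}_n\|_{H^m(\hat{D})} \to 0$, contradicting $\|\hat{u}_n\|_{H^m(\hat{D})} = 1$. Unscaling returns the desired estimate with constant depending only on the shapes of $D$ and $\Gamma$. The main delicate point is the treatment of the top-order trace $\partial^{m-1}\hat{u}/\partial \nu^{m-1}$: strong $H^{m-1}$ convergence does not control it, so one must instead use weak continuity of the trace operator on $H^m(\hat{D})$ to identify its limit; everything else is a routine application of compactness and the polynomial rigidity provided by \cref{Lem:polynomial_BC}.
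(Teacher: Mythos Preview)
Your argument is correct and complete; the scaling reduction, the Rellich--Kondrachov extraction, the weak lower semicontinuity step identifying the limit as a polynomial, the trace handling (including the careful weak-continuity treatment of the top-order normal trace), and the appeal to \cref{Lem:polynomial_BC} all go through as you describe.

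The paper, however, takes a different route. After the same scaling reduction, it defines the right-hand side as a candidate norm $\|\cdot\|_{\mathrm{RHS}}$ on $H^m(D)$, uses \cref{Lem:polynomial_BC} to confirm definiteness, observes $\|\cdot\|_{\mathrm{RHS}}\lesssim\|\cdot\|_{H^m(D)}$ from the trace theorem, and then shows directly that $H^m(D)$ is complete in $\|\cdot\|_{\mathrm{RHS}}$ (via a quotient-space argument producing correcting polynomials $p_n\in\mathbb{P}_{m-1}$ and the equivalence of norms on the finite-dimensional space $\mathbb{P}_{m-1}$); the open mapping theorem then yields the reverse inequality. So the paper avoids Rellich--Kondrachov and weak-compactness machinery entirely, relying instead on completeness plus a soft functional-analytic principle. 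Your compactness--contradiction argument is the more commonly seen textbook approach and is arguably more transparent about \emph{why} the inequality holds (the obstruction is precisely $\mathbb{P}_{m-1}$, which the boundary data kills), while the paper's open-mapping argument is slightly more abstract but sidesteps any discussion of weak limits or delicate trace continuity for the highest-order normal derivative. Both hinge on \cref{Lem:polynomial_BC} at the decisive moment.
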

\begin{proof}
It suffices to take $H = 1$, as the general case then follows by a standard scaling argument~\cite[Section~3.4]{TW:2005}.
Throughout this proof, let $C$ denote the generic positive constant that depends only on the shapes of $D$ and $\Gamma$.
We define
\begin{equation*}
\| u \|_{\text{RHS}} = \left( | u |_{H^m (D)}^2 + \sum_{j=0}^{m-1} \left\| \frac{\partial^j u}{\partial \nu^j} \right\|_{L^2 (\Gamma)}^2 \right)^{\frac{1}{2}},
\quad u \in H^m (D).
\end{equation*}
Then \cref{Lem:polynomial_BC} implies that $\| \cdot \|_{\text{RHS}}$ is indeed a norm.
Moreover, by the trace theorem, we have $\| \cdot \|_{\text{RHS}} \leq C \| \cdot \|_{H^m (D)}$.
Hence, if we show that $H^m (D)$ is a Banach space with the $\| \cdot \|_{\text{RHS}}$-norm, then invoking the open mapping theorem yields $\| \cdot \|_{H^m (D)} \leq C \| \cdot \|_{\text{RHS}}$, which is the desired result.

It remains to prove that $H^m (D)$ is complete with respect to the $\| \cdot \|_{\text{RHS}}$-norm.
Take any Cauchy sequence $\{ u_n \}$ with respect to the $\| \cdot \|_{\text{RHS}}$-norm.
Since $| \cdot |_{H^m (D)} \leq \| \cdot \|_{\text{RHS}}$, a quotient space argument~\cite[Theorem~1.6]{Necas:2012} implies that there exist $(m-1)$th-degree polynomials $\{ p_n \}$ such that $\{ u_n + p_n \}$ converges in the $\| \cdot \|_{H^m (D)}$-norm.
As $\| \cdot \|_{\text{RHS}} \leq C \| \cdot \|_{H^m (D)}$, we readily deduce that $\{ u_n + p_n \}$ converges in the $\| \cdot \|_{\text{RHS}}$-norm, thereby forming a Cauchy sequence.
Consequently, $\{ p_n \}$ is also a Cauchy sequence with respect to the $\| \cdot \|_{\text{RHS}}$-norm.
Invoking the equivalence between the $\| \cdot \|_{\text{RHS}}$- and $\| \cdot \|_{H^m (D)}$-norms in $\mathbb{P}_{m-1} (D)$, we deduce that $\{ p_n \}$ is convergent in the $\| \cdot \|_{\text{RHS}}$-norm.
Therefore, $\{ u_n \}$ also converges in the $\| \cdot \|_{\text{RHS}}$-norm, establishing the completeness of $H^m (D)$ with respect to the $\| \cdot \|_{\text{RHS}}$-norm.
\end{proof}

\bibliographystyle{siamplain}
\bibliography{refs_2mth}
\end{document}